\documentclass[final,1p,times,authoryear]{elsarticle}
\usepackage{amsmath,amsthm,amssymb,mathtools}
\usepackage{bm}
\usepackage{microtype}
\usepackage{enumitem}
\usepackage{hyperref}
\usepackage{xcolor}
\hypersetup{colorlinks=true,linkcolor=blue,citecolor=blue,urlcolor=blue}
\newtheorem{theorem}{Theorem}
\newtheorem{proposition}[theorem]{Proposition}
\newtheorem{lemma}[theorem]{Lemma}
\newtheorem{corollary}[theorem]{Corollary}
\theoremstyle{definition}
\newtheorem{definition}[theorem]{Definition}

\newtheorem{remark}[theorem]{Remark}

\newcommand{\R}{\mathbb{R}}
\newcommand{\C}{\mathbb{C}}

\newcommand{\Z}{\mathbb{Z}}
\newcommand{\E}{\mathbb{E}}

\DeclareMathOperator{\Rea}{Re}

\DeclarePairedDelimiter{\inner}{\langle}{\rangle}

\journal{}

\begin{document}

\begin{frontmatter}

\title{Chirp-Induced Non-Separable Gabor Windows on $\mathbb{R}^d$}

 \author[label1]{Lorenzo De Leonardis}

\affiliation[label1]{organization={Department of International School for Advanced Studies (SISSA) },
             addressline={Via Bonomea 265},
             city={Trieste},
             postcode={34136},
             country={Italy.}}
\author[label2]{Alessandro Mazzoccoli}
\affiliation[label2]{organization={Department of Economics, Roma Tre University},
             addressline={Via Silvio d'Amico 77},
             city={Rome},
             postcode={00145},
             country={Italy.}}
\author[label2]{Pierluigi Vellucci}

\begin{abstract}
We construct an explicit class of non-separable Gabor windows on $L^2(\R^d)$ by applying chirp deformations to tensor-product dual pairs on separable lattices. Starting from one-dimensional dual Gabor frames, we first obtain separable higher-dimensional dual pairs by tensorization. We then transport these systems through the unitary chirp operator $U_C f(x)=e^{\pi i x^T Cx}f(x)$ and the associated phase-space shear, obtaining Gabor systems on lower block-triangular lattices of the form
$
\Lambda_{A,B,C}=\{(Ak,CAk+B\ell):k,\ell\in\Z^d\}.
$
{The construction is governed by a single unitary conjugacy identity for the mixed Gabor
reconstruction operator. As consequences, compact support, smoothness, frame bounds,
exact duality, canonical duals, and approximate-duality errors are transported without loss.} {We also record a covariance-level diagnostic, depending on a chosen STFT reference window,
which describes the second-order time--frequency tilt induced by the chirp. This diagnostic is
separated from the unchanged frame stability constants and is not used here to claim improved
sparsity, denoising, conditioning, or computational complexity.}
\end{abstract}

\begin{keyword}
Gabor frames \sep non-separable windows \sep chirp deformations \sep dual windows \sep time-frequency analysis
\end{keyword}

\end{frontmatter}

\section{Introduction}

The construction of explicit Gabor frames in higher dimension involves two competing requirements.
On the one hand, tensor-product constructions on separable lattices provide a tractable way to lift
one-dimensional windows and dual windows to $L^2(\R^d)$. On the other hand, many genuinely
multivariate signals exhibit geometric features that are not aligned with the coordinate axes, and
therefore call for time-frequency atoms with cross-coordinate coupling. The central question addressed
in this paper is whether one can pass from separable Gabor systems to genuinely non-separable
higher-dimensional systems while retaining explicit dual windows, compact support, smoothness, and
quantitative reconstruction control.

{The answer developed below is constructive. Starting from dual Gabor pairs on product lattices, we
apply a chirp deformation and the associated shear of phase space. This produces chirped windows
on sheared lower block-triangular lattices, while preserving the frame bounds, exact duality,
approximate duality, and the reconstruction estimates inherited from the separable model.
Under an explicit off-diagonal condition on the chirp matrix, the resulting windows are genuinely
non-separable.}

\subsection{Why non-separable windows? Geometry versus noise and computational baselines}\label{subsec:why_nonseparable}

A natural question is why one should introduce \emph{non-separable} Gabor windows in $\R^d$, and whether
non-separability is expected to improve (or worsen) reconstruction in the presence of noise.
The viewpoint adopted in this paper is deliberately \emph{structural}: our chirp-based construction is
designed to change the \emph{geometry} of the time-frequency atoms (their orientation and coupling across
coordinates) while \emph{not} deteriorating the standard stability and noise-propagation constants
attached to the underlying frame reconstruction.

Separable windows of the form $g(x)=\prod_{j=1}^d g_j(x_j)$ on separable lattices
$A\Z^d\times B\Z^d$ provide a robust and highly tractable baseline.
They allow one to lift one-dimensional constructions to higher dimension by tensorization, yielding explicit
dual pairs with controllable regularity (e.g.\ compact support and smoothness) and transparent proofs.
From a practical standpoint, separability also simplifies implementation: many operations factor into
lower-dimensional components, leading to reduced computational complexity and memory footprint.

Despite these advantages, separability imposes a rigid axis-aligned structure.
In multivariate signals and data, relevant features are often \emph{anisotropic} and \emph{coupled} across
coordinates (e.g.\ oriented patterns, oblique structures, and correlated variations). For such phenomena,
axis-aligned atoms may require many coefficients to represent a single geometric feature, whereas atoms that
incorporate cross-coordinate coupling can yield representations that are more faithful to the intrinsic
geometry of the signal. In this sense, non-separability is not introduced here to claim a generic improvement
of global stability constants, but to provide a controlled way to ``tilt'' and couple time-frequency atoms in
$\R^d$ while retaining full analytical tractability.

It is important to separate two distinct issues:
\begin{enumerate}[label=\textup{(\roman*)}]
\item \emph{Linear reconstruction stability} (in the sense of frames), which governs how coefficient perturbations
propagate into the reconstructed signal;
\item \emph{Nonlinear denoising performance} (e.g.\ thresholding/shrinkage), which depends on sparsity and on how the
signal energy concentrates in a few coefficients.
\end{enumerate}
For linear reconstruction from noisy coefficients, separability by itself does not provide an intrinsic
noise-reduction mechanism. The quantities that control noise amplification are the frame bounds (or,
equivalently, the conditioning of the frame operator) and the Bessel bound of the synthesis window used in the
reconstruction. Likewise, in the approximately dual setting, the key bias term is the operator deviation
$\|I-\Theta_{g,\gamma,\Lambda}\|$, which quantifies how close the mixed reconstruction operator is to the identity.
These constants are \emph{global} and are not determined solely by separability/non-separability.

The central feature of the present chirp-based approach is that it preserves the global stability constants and
the reconstruction error estimates inherited from the separable model.
More precisely, starting from a separable dual pair
$(\mathcal G(g,\Lambda),\mathcal G(\gamma,\Lambda))$ on a separable lattice, we apply a unitary chirp deformation
$U_C$ and the associated shear of the lattice. This use of a chirp should be understood as a concrete,
support-preserving instance of the broader metaplectic viewpoint in time-frequency analysis. In particular,
recent work on metaplectic Gabor frames shows that certain metaplectic time-frequency representations can be
represented, up to chirps and linear changes of variables, as rescaled short-time Fourier transforms
\citep{CorderoGiacchi24}. In contrast to that general program, our aim is deliberately narrower and constructive:
we restrict to the shear
$$
S_C(x,\omega)=(x,\omega+Cx)
$$
{and use it to transport explicit dual windows from product lattices to their chirp-sheared images.}

{This produces the chirped window
$$
g_C(x)=U_C g(x)=e^{\pi i x^T C x}\,g(x),
$$
supported and smooth whenever $g$ is, and defined on the sheared lattice $\Lambda_C=S_C\Lambda$.
When $g$ is a tensor-product window and the off-diagonal part of $C$ is nonzero, this
chirped window is genuinely non-separable in the sense of Definition~\ref{def:separable}.}
Because the deformation is unitary and intertwines time-frequency shifts in a precise covariance sense, the
frame bounds are preserved. Moreover, in the approximately dual regime, the reconstruction error is invariant:
$$
\|I-\Theta_{g_C,\gamma_C,\Lambda_C}\|
=
\|I-\Theta_{g,\gamma,\Lambda}\|,
\qquad \gamma_C:=U_C\gamma.
$$
As a consequence, the deterministic error bounds for noisy coefficients, and the corresponding finite-section
mean-square error decompositions, {carry over to the chirp-sheared setting \emph{without any deterioration of the constants}.} In short: in this paper non-separability is introduced to change the \emph{geometry} of the atoms,
not to trade stability for flexibility.

While the global stability and noise-propagation constants remain unchanged under our deformation, the
time-frequency \emph{geometry} can change substantially (the atoms become tilted/coupled).
{This provides a principled, fully explicit setting in which one can separate two effects:
the chirp changes a second-order phase-space geometry descriptor, while the frame stability
constants and the linear reconstruction bounds inherited from the original system remain unchanged.
We do not claim here that this covariance-level change, by itself, improves sparsity,
nonlinear approximation, denoising performance, conditioning, or computational complexity.
Such questions require additional signal-class-dependent analysis.}
The construction therefore isolates a clean separation of roles: separable windows supply explicit baselines
with controlled duals, and the chirp deformation generates non-separable higher-dimensional windows that inherit
the same reconstruction guarantees while potentially offering a better geometric match to non-axis-aligned
structures.

The main contributions of the paper may be summarized as follows. First, we recall and use a
tensorization principle showing how one-dimensional dual Gabor pairs generate dual pairs on separable
lattices in $L^2(\R^d)$. {Second, we record the chirp covariance principle in the form of a single unitary conjugacy
identity for mixed Gabor reconstruction operators. The preservation of frame bounds, exact
duality, canonical duals, and approximate-duality errors is then obtained as a direct consequence
of this identity.}
{Third, we obtain an explicit class of compactly supported smooth chirped tensor-product windows
on sheared lower block-triangular lattices of the form
$$
\Lambda_{A,B,C}
=
\left\{(Ak,CAk+B\ell): k,\ell\in\Z^d\right\}
=
\begin{pmatrix}
A&0\\
CA&B
\end{pmatrix}\Z^{2d},
\qquad C=C^T,
$$
where $A=\operatorname{diag}(a_1,\dots,a_d)$ and $B=\operatorname{diag}(b_1,\dots,b_d)$ are positive
diagonal matrices. Thus the time component is sampled on the rectangular lattice $A\Z^d$, while the
frequency component is sheared by the term $CAk$ before adding the rectangular frequency sampling
$B\Z^d$. When $C=0$, one recovers the product lattice $A\Z^d\times B\Z^d$; when $C\neq0$, the
phase-space shear is nontrivial. However, as noted below, this does not by itself imply that
$\Lambda_{A,B,C}$ is non-product as a set.}
{Fourth, we quantify the effect of the chirp deformation on phase-space covariance and show that
chirp-induced time--frequency coupling can be measured explicitly while the frame stability constants
and the linear reconstruction-error bounds remain unchanged.}

\subsection{Position of the present paper within the literature}

The present work is motivated by several complementary strands in the Gabor-frame literature.

The first strand is the constructive theory of Gabor frames and explicit window design. The constructive
philosophy goes back to the classical paper of Daubechies, Grossmann, and Meyer \citep{DGM86}, where explicit
and implementable nonorthogonal expansions were emphasized. For the general Hilbert-space frame background,
including frame operators and canonical duals, we refer to \citet{Christensen2016}. In higher dimension,
explicit smooth compactly supported windows for multivariate Gabor frames were constructed by Pfander,
Rashkov, and Wang \citep{PRW12}. Their geometric viewpoint is one of the main motivations for the present
paper. Our construction is different in scope: instead of building windows directly from fundamental domains,
we start from separable dual pairs and transport them through a unitary chirp deformation.

A closely related strand concerns Gabor analysis on non-separable time--frequency lattices
and, in the finite setting, on non-product subgroups of the time--frequency plane.
Feichtinger, Strohmer, and Christensen studied finite Gabor expansions indexed by
general additive subgroups rather than only product lattices, showing that the associated
dual frame can still be generated by a single dual atom \citep{FeichtingerStrohmerChristensen1995}.
Feichtinger and Kozek developed an operator-theoretic framework for TF-lattice-invariant
operators on elementary locally compact Abelian groups \citep{FeichtingerKozek1998}.
Related multidimensional non-separable Gabor expansions were considered in
\citep{FeichtingerKozekPrinzStrohmer1996}.

In the continuous and discrete signal-processing literature, van Leest and Bastiaans
formulated Gabor expansions and transforms on non-separable time--frequency lattices
by representing the lattice generator in Hermite normal form
\citep{VanLeestBastiaans2000}. In that representation, the passage from a rectangular
lattice to a sheared lattice is implemented by quadratic phase multiplications of the
signal, the synthesis and analysis windows, and the coefficient sequence. Discrete-time
implementations of such schemes were subsequently described through filter-bank,
shear, and Zak-transform viewpoints \citep{VanLeest1999,VanLeestBastiaans2004}.

The present paper is complementary to these works. We do not develop a discrete
implementation theory, nor do we derive Zak-transform product formulas or filter-bank
realizations. Instead, we work on $L^2(\mathbb R^d)$ and use one explicit unitary chirp
covariance identity to transport tensor-product dual Gabor pairs, including exact duality,
canonical duals, approximate-duality errors, compact support, and smoothness. Moreover,
we distinguish throughout between three different notions: a nontrivial phase-space shear,
a non-product lattice, and genuine functional non-separability of the window.

A second relevant strand is the symplectic and metaplectic viewpoint in time-frequency analysis.
The chirp factor
$$
e^{\pi i x^T Cx}
$$
is one of the elementary metaplectic building blocks, and the associated shear
$$
(x,\omega)\mapsto (x,\omega+Cx)
$$
is a simple symplectic transformation. Recent work on metaplectic Gabor frames provides a broad framework in
which Gabor-type systems are generated from metaplectic time-frequency representations \citep{CorderoGiacchi24}.
The present paper may be viewed as a concrete and explicitly computable subcase of this general philosophy:
we do not develop a full metaplectic theory, but we exploit one chirp shear to obtain non-separable windows
while preserving compact support, smoothness, frame bounds, and reconstruction errors.

A third strand concerns the control of dual windows. Exact compactly supported dual windows were studied in
detail by Stoeva \citep{Sto22}, while approximately dual Gabor systems and almost perfect reconstruction were
developed by Christensen, Janssen, Kim, and Kim \citep{CJKK18}. From a more computational perspective,
Werther, Eldar, and Subbanna \citep{WES05} studied dual Gabor frames different from the canonical dual and
emphasized the role of alternative duals in reducing computational complexity and improving conditioning.
Our approach is complementary to these works: we do not construct a new dual by solving an additional
optimization or inversion problem, but rather show that exact and approximate duality can be transported
unitarily from a separable model to a non-separable one.

A closely related finite-dimensional perspective has recently been developed by
\citet{AsipchukDeCarliRodriguez26}, who study finite Gabor systems whose frame-operator matrices are unitarily equivalent, through explicit and computationally efficient transformations, to block-diagonal matrices. In particular, they show that certain finite Gabor systems indexed by Cartesian products of subsets of $\mathbb Z_N$ admit such a block structure, which makes the inversion of the frame operator substantially more tractable. Although our setting is the continuous Hilbert space $L^2(\mathbb R^d)$, their results provide a useful finite-dimensional counterpart to the computational question of how frame-operator structure can simplify reconstruction. In contrast, the present paper does not diagonalize the frame operator directly; instead, it uses chirp covariance to transport duality and frame-operator properties from a separable lattice to a sheared, non-separable one.

A further relevant perspective is provided by multi-window and Banach-space theories. The work of Balan,
Christensen, Krishtal, Okoudjou, and Romero \citep{BCKOR13} establishes stability and reconstruction results
for multi-window Gabor frames in Wiener amalgam spaces, showing that operator-algebra methods preserve
localization properties of dual systems. Rational and mixed rational subspace multi-window Gabor frames were
studied in \citep{ZL14,ZL18}, while Gabor frame sets for subspaces were analyzed in \citep{LL11}. These works
are not used directly in the proofs below, but they indicate a broader framework in which explicit dual
constructions and stability of reconstruction remain highly relevant.

Finally, for Gaussian windows in several variables, Gröchenig \citep{Gro11} showed that the multivariate frame
problem is closely related to sampling of entire functions in Bargmann--Fock spaces of several complex variables.
This Gaussian program is conceptually different from the present compactly supported constructive approach.
For the optimization of Gaussian frame bounds on separable lattices, see also \citep{FS17}. A natural long-term
direction is to understand whether the chirp-transport philosophy developed here can interact with Gaussian
and Bargmann--Fock methods, {especially in the study of sheared or non-product lattices and lattice-shape optimization.}

\section{Preliminaries and notation}

Throughout the paper, for $x,\omega \in \R^d$ we denote by
$$
T_x f(t):=f(t-x),
\qquad
M_\omega f(t):=e^{2\pi i \omega\cdot t}f(t),
$$
and we write
$$
\pi(x,\omega):=M_\omega T_x.
$$
Given a lattice $\Lambda\subset \R^{2d}$ and a window $g\in L^2(\R^d)$, we consider the Gabor system
$$
\mathcal G(g,\Lambda):=\{\pi(\lambda)g:\lambda\in\Lambda\}.
$$
We say that $\mathcal G(g,\Lambda)$ is a frame for $L^2(\R^d)$ if there exist constants
$0<A\leq B<\infty$ such that
$$
A\|f\|_2^2
\leq
\sum_{\lambda\in\Lambda} |\inner{f,\pi(\lambda)g}|^2
\leq
B\|f\|_2^2,
\qquad \forall f\in L^2(\R^d).
$$
If $\mathcal G(g,\Lambda)$ is a frame, its frame operator is
$$
S_{g,\Lambda}f
:=
\sum_{\lambda\in\Lambda}\inner{f,\pi(\lambda)g}\,\pi(\lambda)g,
\qquad f\in L^2(\R^d)\, .
$$
Since $\mathcal{G}(g, \Lambda)$ is a frame, $S_{g, \Lambda}$ is bounded, positive, self-adjoint and invertible \citep{Christensen2016}. The canonical dual window is $\tilde{g}:=S_{g, \Lambda}^{-1} g$.

\begin{definition}
Let $g,\gamma\in L^2(\R^d)$ and let $\Lambda\subset \R^{2d}$ be a lattice.
We say that $\mathcal G(\gamma,\Lambda)$ is a \emph{dual Gabor frame} of
$\mathcal G(g,\Lambda)$ if
$$
f
=
\sum_{\lambda\in\Lambda}\inner{f,\pi(\lambda)\gamma}\,\pi(\lambda)g
=
\sum_{\lambda\in\Lambda}\inner{f,\pi(\lambda)g}\,\pi(\lambda)\gamma,
\qquad \forall f\in L^2(\R^d),
$$
with convergence in $L^2(\R^d)$.
\end{definition}

\begin{definition}
Let $g,\gamma\in L^2(\R^d)$ and let $\Lambda\subset \R^{2d}$ be a lattice.
We define the mixed reconstruction operator
$$
\Theta_{g,\gamma,\Lambda}f
:=
\sum_{\lambda\in\Lambda}\inner{f,\pi(\lambda)g}\,\pi(\lambda)\gamma.
$$
The pair $(\mathcal G(g,\Lambda),\mathcal G(\gamma,\Lambda))$ is called an
\emph{approximately dual pair} if
$$
\|I-\Theta_{g,\gamma,\Lambda}\|<1\, ,
$$
where $\|\cdot\|$ is the operator norm on $L^2(\R^d)$.
\end{definition}
\begin{remark}[Operator norm and reconstruction error]\label{rem:operator_norm_error}
For an approximately dual pair $(\mathcal G(g,\Lambda),\mathcal G(\gamma,\Lambda))$, the quantity
$$\|I-\Theta_{g,\gamma,\Lambda}\|$$
will be referred to as the \emph{reconstruction error} (or \emph{approximation error}), since it controls the
deviation of the mixed reconstruction operator $\Theta_{g,\gamma,\Lambda}$ from the identity.
\end{remark}

{The class of sheared lower block-triangular lattices considered in this paper is generated by matrices of the following form. Let}
$$
A=\operatorname{diag}(a_1,\dots,a_d),
\qquad
B=\operatorname{diag}(b_1,\dots,b_d),
\qquad a_j,b_j>0,
$$
and let $C\in \R^{d\times d}$ be symmetric. We define
$$
\Lambda_{A,B,C}
:=
\left\{
(Ak,\,CAk+B\ell):k,\ell\in \Z^d
\right\}
=
\begin{pmatrix}
A & 0\\
CA & B
\end{pmatrix}\Z^{2d}.
$$
When $C=0$, the lattice is separable:
$$
\Lambda_{A,B,0}=A\Z^d\times B\Z^d.
$$
\begin{remark}[Shear, product structure, and non-separability]
{The condition $C\neq 0$ means that the phase-space shear
$$
S_C(x,\omega)=(x,\omega+Cx)
$$
is nontrivial. This should not be confused with the assertion that the lattice
$\Lambda_{A,B,C}$ is non-product as a set, nor with the assertion that the chirped
window is non-separable as a function.}

{Indeed, the sheared lattice
$$
\Lambda_{A,B,C}
=
\{(Ak,CAk+B\ell):k,\ell\in\mathbb Z^d\}
$$
coincides with the product lattice $A\mathbb Z^d\times B\mathbb Z^d$ whenever
$$
CA=BN
$$
for some integer matrix $N\in M_d(\mathbb Z)$. In that case
$$
CAk+B\ell=B(Nk+\ell),
$$
and therefore
$$
\Lambda_{A,B,C}
=
\{(Ak,B(Nk+\ell)):k,\ell\in\mathbb Z^d\}
=
A\mathbb Z^d\times B\mathbb Z^d.
$$
Thus a nontrivial shear need not produce a non-product lattice; it may merely reparametrize
the original product lattice.}

{On the other hand, genuine non-separability of the chirped tensor-product window
$$
U_Cg(x)=e^{\pi i x^TCx}\prod_{m=1}^d g_m(x_m)
$$
is a different issue. It is caused by off-diagonal entries of $C$, not by the mere fact that
$C\neq0$. Thus the three notions--nontrivial chirp deformation, non-product lattice geometry,
and non-separability of the window--must be kept distinct.}
\end{remark}
This convention should be compared with the terminology used in parts of the signal-processing
literature, where a time--frequency lattice is often called non-separable when time shifts and
modulations are no longer independent operations \citep{VanLeestBastiaans2000,VanLeestBastiaans2004}.
In the present paper we use ``non-separable'' primarily for functions, and we explicitly separate
this functional property from the product or non-product nature of the underlying lattice.
\begin{definition}\label{def:separable}
A function $g:\R^d\to \C$ is called \emph{separable} if there exist one-variable functions
$g_1,\dots,g_d:\R\to\C$ such that such that
$$
g(x_1,\dots,x_d)=\prod_{j=1}^d g_j(x_j).
$$
Otherwise $g$ is called \emph{non-separable}.
\end{definition}

\section{Tensor-product dual pairs on separable lattices}

In this section we use the standard Hilbert tensor product
$$
\bigotimes_{j=1}^d L^2(\R),
$$
defined as the completion of finite linear combinations of simple tensors with respect to the inner product
$$
\left\langle \bigotimes_{j=1}^d f_j,\bigotimes_{j=1}^d h_j \right\rangle
:=
\prod_{j=1}^d \langle f_j,h_j\rangle_{L^2(\R)}.
$$
Thus, for a simple tensor one has
$$
\left\| \bigotimes_{j=1}^d f_j \right\|^2_{\bigotimes_{j=1}^d L^2(\R)}
=
\prod_{j=1}^d \|f_j\|_{L^2(\R)}^2.
$$
We shall use the canonical unitary identification
$$
\bigotimes_{j=1}^d L^2(\R) \cong L^2(\R^d),
$$
under which
$$
f_1\otimes\cdots\otimes f_d
\quad\longleftrightarrow\quad
\left[(x_1,\dots,x_d)\mapsto \prod_{j=1}^d f_j(x_j)\right].
$$
Indeed, by Fubini's theorem,
$$
\left\|\prod_{j=1}^d f_j(x_j)\right\|_{L^2(\R^d)}^2
=
\int_{\R^d}\prod_{j=1}^d |f_j(x_j)|^2\,dx_1\cdots dx_d
=
\prod_{j=1}^d\|f_j\|_{L^2(\R)}^2.
$$
By density, this identification extends from simple tensors to the completed Hilbert tensor product.
Throughout the paper we use this identification without further comment.

We first record the elementary tensorization principle that will serve as the starting point
for our constructions.

\begin{proposition}\label{prop:tensor_duals}
For each $j=1,\dots,d$, let $a_j,b_j>0$ and let $g_j,\gamma_j\in L^2(\R)$ be such that
$$
\mathcal G(g_j,a_j\Z\times b_j\Z)
\quad\text{and}\quad
\mathcal G(\gamma_j,a_j\Z\times b_j\Z)
$$
are dual Gabor frames for $L^2(\R)$.

{Define
$$
g:=g_1\otimes \cdots \otimes g_d,
\qquad
\gamma:=\gamma_1\otimes \cdots \otimes \gamma_d,
$$
and identify these tensors with functions on $\R^d$ by
$$
g(x_1,\dots,x_d)=\prod_{j=1}^d g_j(x_j),
\qquad
\gamma(x_1,\dots,x_d)=\prod_{j=1}^d \gamma_j(x_j).
$$
Then $\mathcal G(g,\Lambda_{A,B,0})$ and $\mathcal G(\gamma,\Lambda_{A,B,0})$
are dual Gabor frames for $L^2(\R^d)$.}

If, in addition, each $g_j,\gamma_j$ belongs to $C_c^\infty(\R)$, then
$g,\gamma\in C_c^\infty(\R^d)$.
\end{proposition}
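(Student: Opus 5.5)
The plan is to reduce the $d$-dimensional reconstruction to the one-dimensional ones through the tensor structure of the operators involved. For $\lambda=(Ak,B\ell)\in\Lambda_{A,B,0}$ the time--frequency shift factorizes as
$$
\pi(Ak,B\ell)=\bigotimes_{j=1}^d \pi(a_jk_j,b_j\ell_j),
$$
since $T_{Ak}$ and $M_{B\ell}$ act coordinatewise. Consequently $\pi(\lambda)g=\bigotimes_j \pi(a_jk_j,b_j\ell_j)g_j$, and similarly for $\gamma$. For simple tensors $f=f_1\otimes\cdots\otimes f_d$ the coefficients factor as well:
$$
\inner{f,\pi(\lambda)\gamma}=\prod_j\inner{f_j,\pi(a_jk_j,b_j\ell_j)\gamma_j}.
$$

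The first step is to show that $\mathcal G(g,\Lambda_{A,B,0})$ and $\mathcal G(\gamma,\Lambda_{A,B,0})$ are Bessel sequences. For simple tensors, the sum of squared coefficients over $\Z^{2d}$ factors into a product of one-dimensional sums, which is bounded by $\prod_j B_j\|f_j\|^2$. This estimate does not extend to general $f$ by linearity alone. Instead, I would use the identification of the analysis operator: $C_g$ equals $C_{g_1}\otimes\cdots\otimes C_{g_d}$ as a map $\bigotimes L^2(\R)\to\bigotimes \ell^2(\Z^2)\cong\ell^2(\Z^{2d})$. A tensor product of bounded operators on Hilbert tensor products is bounded, with norm equal to the product of the norms. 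Hence $C_g$ is bounded with $\|C_g\|^2\le\prod_j B_j$, and the same argument applies to $C_\gamma$.

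The second step is to compute the mixed operator. Boundedness gives $\Theta_{g,\gamma,\Lambda_{A,B,0}}=C_\gamma^*C_g$, and by the tensor factorization this equals $\bigotimes_j \Theta_{g_j,\gamma_j}$. Each one-dimensional factor is the identity by the duality hypothesis, so the product is $I$ on simple tensors. Since both sides are bounded, density of finite linear combinations of simple tensors gives $\Theta=I$ on all of $L^2(\R^d)$, with unconditional $L^2$ convergence of the series inherited from Bessel-ness. The reverse reconstruction $\Theta_{\gamma,g}=I$ follows by the same argument, or simply as the adjoint. Frame-ness of each system then follows from the Bessel property plus duality: $\|f\|^2=\inner{C_g f,C_\gamma f}\le\|C_gf\|\sqrt{B_\gamma}\|f\|$, which gives a lower bound.

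The main obstacle is justifying the operator tensor identity $C_g=\bigotimes_j C_{g_j}$, which underlies both the boundedness in the first step and the factorization in the second. I would handle this cleanly by checking equality on simple tensors and invoking the standard fact that $\|T_1\otimes T_2\|=\|T_1\|\|T_2\|$ for bounded operators on Hilbert tensor products. Finally, the regularity claim is elementary. A product of $C_c^\infty(\R)$ functions in separate variables is smooth by the Leibniz rule, and its support lies in the product of the compact supports, which is compact.
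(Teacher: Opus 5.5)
Your proposal is correct and follows essentially the same route as the paper. Both arguments factor the time--frequency shifts, obtain the Bessel bounds from the tensor product of the one-dimensional analysis operators, and extend the reconstruction identities from simple tensors by density. Your operator-level identity $\Theta_{g,\gamma,\Lambda_{A,B,0}}=C_\gamma^*C_g=\bigotimes_j \Theta_{g_j,\gamma_j}$, the adjoint argument for the reverse formula, and the explicit lower frame bound $1/B_\gamma$ only streamline the paper's rectangular partial-sum and Cauchy--Schwarz steps; they are not a different method.
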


\begin{proof}
For $\lambda=(Ak,B\ell)\in \Lambda_{A,B,0}$, one has
$$
\pi(\lambda)g
=
\bigotimes_{j=1}^d \big(M_{\ell_j b_j}T_{k_j a_j}g_j\big),
\qquad
\pi(\lambda)\gamma
=
\bigotimes_{j=1}^d \big(M_{\ell_j b_j}T_{k_j a_j}\gamma_j\big).
$$
Let $f=f_1\otimes\cdots\otimes f_d$ be a simple tensor. Then
$$
\inner{f,\pi(Ak,B\ell)\gamma}
=
\prod_{j=1}^d
\inner{f_j,M_{\ell_j b_j}T_{k_j a_j}\gamma_j}.
$$
Hence, for rectangular partial sums,
$$
\sum_{\substack{|k_j|\leq N\\ |\ell_j|\leq N}}
\inner{f,\pi(Ak,B\ell)\gamma}\,\pi(Ak,B\ell)g
=
\bigotimes_{j=1}^d
\left(
\sum_{\substack{|m|\leq N\\ |n|\leq N}}
\inner{f_j,M_{nb_j}T_{ma_j}\gamma_j}\,M_{nb_j}T_{ma_j}g_j
\right).
$$
Passing to the limit as $N\to\infty$ and using the one-dimensional duality for each factor, we obtain
$$
\sum_{\lambda\in\Lambda_{A,B,0}}
\inner{f,\pi(\lambda)\gamma}\,\pi(\lambda)g
=
f_1\otimes\cdots\otimes f_d
=
f.
$$
{The same argument yields the reverse reconstruction formula on simple tensors, and hence by linearity
on finite linear combinations of simple tensors.}

{Before extending the reconstruction identities by density, we verify that the tensorized systems are
Bessel. Let $B_j^g$ and $B_j^\gamma$ be Bessel bounds for
$\mathcal G(g_j,a_j\Z\times b_j\Z)$ and $\mathcal G(\gamma_j,a_j\Z\times b_j\Z)$, respectively. Define the
one-dimensional analysis operators
$$
\mathcal C_j^g f_j
:=
\big(\inner{f_j,M_{nb_j}T_{ma_j}g_j}\big)_{(m,n)\in\Z^2},
\qquad
\mathcal C_j^\gamma f_j
:=
\big(\inner{f_j,M_{nb_j}T_{ma_j}\gamma_j}\big)_{(m,n)\in\Z^2}.
$$
Then
$$
\|\mathcal C_j^g f_j\|_{\ell^2(\Z^2)}^2\le B_j^g\|f_j\|_2^2,
\qquad
\|\mathcal C_j^\gamma f_j\|_{\ell^2(\Z^2)}^2\le B_j^\gamma\|f_j\|_2^2.
$$
Consequently the Hilbert-space tensor product
$$
\mathcal C^g:=\bigotimes_{j=1}^d \mathcal C_j^g:
\bigotimes_{j=1}^d L^2(\R)\longrightarrow
\bigotimes_{j=1}^d \ell^2(\Z^2)\cong \ell^2(\Z^{2d})
$$
extends to a bounded operator with
$$
\|\mathcal C^g\|\le \prod_{j=1}^d \sqrt{B_j^g}.
$$
For simple tensors, $\mathcal C^g f$ is precisely the coefficient sequence
$$
\big(\inner{f,\pi(Ak,B\ell)g}\big)_{(k,\ell)\in\Z^d\times\Z^d}.
$$
By density and continuity this remains true for all $f\in L^2(\R^d)$, and therefore
$$
\sum_{k,\ell}\big|\inner{f,\pi(Ak,B\ell)g}\big|^2
\le
\left(\prod_{j=1}^d B_j^g\right)\|f\|_2^2,
\qquad f\in L^2(\R^d).
$$
Thus $\mathcal G(g,\Lambda_{A,B,0})$ is Bessel with upper bound
$B_g:=\prod_{j=1}^d B_j^g$. The same argument gives that
$\mathcal G(\gamma,\Lambda_{A,B,0})$ is Bessel with upper bound
$B_\gamma:=\prod_{j=1}^d B_j^\gamma$.}

Since finite linear combinations of simple tensors are dense in $L^2(\R^d)$, it remains to justify
that the reconstruction operators are bounded on $L^2(\R^d)$, so that the identities extend by continuity.
Let
$$
\Theta_{g,\gamma,\Lambda_{A,B,0}}f
:=
\sum_{\lambda\in\Lambda_{A,B,0}}\inner{f,\pi(\lambda)\gamma}\,\pi(\lambda)g,
\qquad f\in L^2(\R^d),
$$
and define analogously $\Theta_{\gamma,g,\Lambda_{A,B,0}}$ with the roles of $g$ and $\gamma$ interchanged.
{Let $B_g$ and $B_\gamma$ be Bessel bounds for
$G(g,\Lambda_{A,B,0})$ and $G(\gamma,\Lambda_{A,B,0})$},
respectively
$$
\sum_{\lambda\in\Lambda_{A,B,0}}|\inner{f,\pi(\lambda)g}|^2\le B_g\|f\|_2^2,
\qquad
\sum_{\lambda\in\Lambda_{A,B,0}}|\inner{f,\pi(\lambda)\gamma}|^2\le B_\gamma\|f\|_2^2,
\qquad \forall f\in L^2(\R^d).
$$
Fix $f,h\in L^2(\R^d)$. Then, by Cauchy--Schwarz in $\ell^2(\Lambda_{A,B,0})$,
\begin{align*}
\big|\inner{\Theta_{g,\gamma,\Lambda_{A,B,0}}f,h}\big|
&=
\left|\sum_{\lambda\in\Lambda_{A,B,0}}\inner{f,\pi(\lambda)\gamma}\,\inner{\pi(\lambda)g,h}\right|\\
&\le
\left(\sum_{\lambda\in\Lambda_{A,B,0}}|\inner{f,\pi(\lambda)\gamma}|^2\right)^{1/2}
\left(\sum_{\lambda\in\Lambda_{A,B,0}}|\inner{h,\pi(\lambda)g}|^2\right)^{1/2}\\
&\le
\sqrt{B_\gamma}\,\|f\|_2\;\sqrt{B_g}\,\|h\|_2.
\end{align*}
Taking the supremum over $\|h\|_2=1$ yields
$$
\|\Theta_{g,\gamma,\Lambda_{A,B,0}}f\|_2\le \sqrt{B_gB_\gamma}\,\|f\|_2,
\qquad \forall f\in L^2(\R^d),
$$
hence $\Theta_{g,\gamma,\Lambda_{A,B,0}}$ is bounded on $L^2(\R^d)$. The same argument shows that
$\Theta_{\gamma,g,\Lambda_{A,B,0}}$ is bounded. We have proved on simple tensors that $\Theta_{g,\gamma,\Lambda_{A,B,0}}f=f$ and
$\Theta_{\gamma,g,\Lambda_{A,B,0}}f=f$. By density of finite linear combinations of simple tensors and
boundedness of the reconstruction operators, both identities extend to all $f\in L^2(\R^d)$.

The regularity statement is immediate because finite tensor products of compactly supported
smooth functions are again compactly supported and smooth.
\end{proof}

\begin{remark}
Proposition \ref{prop:tensor_duals} is elementary, but it is important for the present paper because it provides the separable starting point from which non-separable constructions will be generated. In concrete applications, one may choose the one-dimensional dual pairs from compactly supported exact constructions as in \citep{Sto22}, or from other explicit one-dimensional Gabor-frame constructions in the spirit of \citep{DGM86}.
\end{remark}

\section{Chirp deformations and sheared lattices}
\label{sec:chirp}
The use of chirp and metaplectic deformations is consistent with the symplectic-geometric viewpoint that already plays an important role in higher-dimensional Gabor analysis; compare in particular \citep{PRW12}. In the present paper we restrict ourselves to a concrete chirp model because it allows a fully explicit transfer of support, smoothness, duality, and reconstruction estimates.

{We now pass from product lattices to chirp-sheared lattices through a simple metaplectic model,
namely chirp multiplication.}

\begin{definition}
Let $C\in\R^{d\times d}$ be symmetric. The associated chirp operator
$U_C:L^2(\R^d)\to L^2(\R^d)$ is defined by
$$
(U_C f)(t):=e^{\pi i\, t^T C t}f(t).
$$
\end{definition}

Since $|e^{\pi i\, t^T C t}|=1$, the operator $U_C$ is unitary on $L^2(\R^d)$.
Moreover, $U_C$ preserves compact support, smoothness, and the Schwartz class.

The following covariance relation is the continuous Hilbert-space counterpart of the
quadratic-phase shear mechanisms used in non-separable Gabor schemes on general
time--frequency lattices; compare, in particular, the Hermite-normal-form reduction in
\citet{VanLeestBastiaans2000}. Our use of the identity below is different in emphasis:
it is not used to derive an implementation from a rectangular algorithm, but to transport
frame-theoretic properties and explicit dual windows by unitary conjugacy.
\begin{lemma}[Covariance under chirp]\label{lem:chirp_covariance}
Let $C\in \R^{d\times d}$ be symmetric, and define
$$
S_C(x,\omega):=(x,\omega+Cx), \qquad (x,\omega)\in \R^{2d}.
$$
Then, for all $x,\omega\in \R^d$,
$$
\pi(S_C(x,\omega))\,U_C
=
e^{\pi i x^T C x}\,U_C\,\pi(x,\omega).
$$
\end{lemma}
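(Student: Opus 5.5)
The plan is to verify the identity pointwise by applying both sides to an arbitrary $f\in L^2(\R^d)$ and comparing the resulting functions at a point $t\in\R^d$. Both sides are compositions of unitary operators (a modulation, a translation, and multiplication by a unimodular function), so it suffices to check equality almost everywhere for every $f$. No earlier result is needed beyond the definitions of $T_x$, $M_\omega$, $\pi(x,\omega)=M_\omega T_x$ and $U_C$.

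First I compute the right-hand side. By the definitions,
$$
\big(U_C\,\pi(x,\omega)f\big)(t)=e^{\pi i\,t^TCt}\,e^{2\pi i\,\omega\cdot t}\,f(t-x),
$$
so the right-hand side evaluated at $t$ is $e^{\pi i x^TCx}\,e^{\pi i t^TCt}\,e^{2\pi i\omega\cdot t}f(t-x)$.

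Next I compute the left-hand side, with $S_C(x,\omega)=(x,\omega+Cx)$. The translation acts on the chirped function $U_Cf$, so the quadratic phase is evaluated at $t-x$:
$$
\big(\pi(x,\omega+Cx)U_Cf\big)(t)=e^{2\pi i(\omega+Cx)\cdot t}\,e^{\pi i (t-x)^TC(t-x)}\,f(t-x).
$$
The key step is to expand the shifted quadratic form using the symmetry $C=C^T$:
$$
(t-x)^TC(t-x)=t^TCt-2\,x^TCt+x^TCx .
$$
The cross term then contributes $e^{-2\pi i\,x^TCt}$. The extra modulation contributes $e^{2\pi i (Cx)\cdot t}=e^{2\pi i\,x^TC^Tt}=e^{2\pi i\,x^TCt}$, again using symmetry. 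These two factors cancel exactly.

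What remains on the left is $e^{\pi i t^TCt}\,e^{\pi i x^TCx}\,e^{2\pi i\omega\cdot t}f(t-x)$, which coincides with the right-hand side. The only point requiring care is this cancellation of the bilinear cross term, which is precisely where symmetry of $C$ is used; for nonsymmetric $C$ one would have to replace $C$ by its symmetric part in the shear. There is no genuine analytic obstacle, since every operator involved is defined pointwise and all identities hold for every $t$.
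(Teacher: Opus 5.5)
Your proposal is correct and follows essentially the same route as the paper: apply both sides to $f$, evaluate at $t$, expand $(t-x)^TC(t-x)$ using $C=C^T$, and note that the cross-term factor $e^{-2\pi i x^TCt}$ cancels the extra modulation $e^{2\pi i (Cx)\cdot t}$. Your side remark that a non-symmetric $C$ would require its symmetric part in the shear is also accurate, though the lemma does not need it.
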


\begin{proof}
Let $f\in L^2(\R^d)$ and $t\in \R^d$. Then
\begin{align*}
\big(\pi(S_C(x,\omega))U_C f\big)(t)
&=
e^{2\pi i (\omega+Cx)\cdot t}\,
(U_C f)(t-x)\\
&=
e^{2\pi i (\omega+Cx)\cdot t}\,
e^{\pi i (t-x)^T C (t-x)}f(t-x).
\end{align*}
Because $C$ is symmetric,
$$
(t-x)^TC(t-x)=t^TCt-2x^TCt+x^TCx.
$$
Therefore
\begin{align*}
\big(\pi(S_C(x,\omega))U_C f\big)(t)
&=
e^{2\pi i\omega\cdot t}
e^{2\pi i x^TCt}
e^{\pi i t^TCt}
e^{-2\pi i x^TCt}
e^{\pi i x^TCx}
f(t-x)\\
&=
e^{\pi i x^TCx}
e^{\pi i t^TCt}
e^{2\pi i\omega\cdot t}
f(t-x)\\
&=
e^{\pi i x^TCx}\,(U_C\pi(x,\omega)f)(t).
\end{align*}
This proves the claim.
\end{proof}

{\begin{proposition}[Unitary conjugacy of mixed Gabor reconstruction]\label{prop:mixed_conjugacy}
Let $\Lambda\subset\R^{2d}$ be a lattice, let $C=C^T$, and assume that
$\mathcal G(g,\Lambda)$ and $\mathcal G(\gamma,\Lambda)$ are Bessel systems. Then
$$
\Theta_{U_C g,U_C\gamma,S_C\Lambda}
=
U_C\,\Theta_{g,\gamma,\Lambda}\,U_C^{-1}.
$$
\end{proposition}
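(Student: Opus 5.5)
The plan is to compute the mixed operator on the sheared lattice term by term, using Lemma~\ref{lem:chirp_covariance}, and to observe that the scalar phase factors cancel in pairs. The first step is to rewrite the lemma in operator form. For $\lambda\in\Lambda$ we have
$$
\pi(S_C\lambda)\,U_C=c_\lambda\,U_C\,\pi(\lambda),
\qquad c_\lambda:=e^{\pi i x^TCx}\ \text{for }\lambda=(x,\omega),\quad |c_\lambda|=1.
$$
Applying this to $g$ and $\gamma$ gives $\pi(S_C\lambda)U_Cg=c_\lambda U_C\pi(\lambda)g$, and the same identity with $\gamma$ in place of $g$.

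Next, since $S_C$ is a linear bijection of $\R^{2d}$ (inverse $S_{-C}$), $S_C\Lambda$ is again a lattice and $\lambda\mapsto S_C\lambda$ is a bijection $\Lambda\to S_C\Lambda$. We reindex the defining sum of $\Theta_{U_Cg,U_C\gamma,S_C\Lambda}$ over $\lambda\in\Lambda$. For $f\in L^2(\R^d)$, a single term becomes
$$
\inner{f,\pi(S_C\lambda)U_Cg}\,\pi(S_C\lambda)U_C\gamma
=\overline{c_\lambda}\,c_\lambda\,\inner{U_C^{-1}f,\pi(\lambda)g}\,U_C\pi(\lambda)\gamma
=U_C\Big(\inner{U_C^{-1}f,\pi(\lambda)g}\,\pi(\lambda)\gamma\Big),
$$
using the unitarity of $U_C$ to move it across the inner product, together with $|c_\lambda|^2=1$.

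It remains to sum, and here lies the only genuine subtlety: convergence and the independence of ordering. The same computation shows that the coefficients satisfy $|\inner{f,\pi(S_C\lambda)U_Cg}|=|\inner{U_C^{-1}f,\pi(\lambda)g}|$. Hence $\mathcal G(U_Cg,S_C\Lambda)$ is Bessel with the same bound as $\mathcal G(g,\Lambda)$, and likewise for $\gamma$. For a Bessel system the synthesis map converges unconditionally on $\ell^2$, so $\Theta_{g,\gamma,\Lambda}h$ is an unconditionally convergent series for every $h$, and both mixed operators are well defined and bounded, exactly as in the Cauchy--Schwarz estimate in the proof of Proposition~\ref{prop:tensor_duals}.

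Applying the bounded operator $U_C$ to the unconditionally convergent series for $\Theta_{g,\gamma,\Lambda}(U_C^{-1}f)$ commutes with the summation. Combined with the termwise identity and the reindexing bijection, this yields $\Theta_{U_Cg,U_C\gamma,S_C\Lambda}f=U_C\Theta_{g,\gamma,\Lambda}U_C^{-1}f$ for all $f$, which is the claim.
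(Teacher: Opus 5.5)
Your proposal is correct and follows the same route as the paper: apply Lemma~\ref{lem:chirp_covariance} term by term, observe that the phase $\overline{c_\lambda}$ from the analysis coefficient cancels the phase $c_\lambda$ from the synthesis atom, and pull $U_C$ out of the sum over $\Lambda$. Your extra justification (the reindexing bijection $\lambda\mapsto S_C\lambda$ together with unconditional convergence of Bessel synthesis series) spells out the convergence step that the paper's proof leaves implicit.
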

\begin{proof}
Let $f\in L^2(\R^d)$. By Lemma~\ref{lem:chirp_covariance}, for
$\lambda=(x,\omega)\in\Lambda$,
$$
\inner{f,\pi(S_C\lambda)U_C g}
=
e^{-\pi i x^TCx}
\inner{U_C^{-1}f,\pi(\lambda)g},
$$
and
$$
\pi(S_C\lambda)U_C\gamma
=
e^{\pi i x^TCx}U_C\pi(\lambda)\gamma.
$$
The phase factors cancel in the mixed reconstruction sum. Hence
$$
\begin{aligned}
\Theta_{U_C g,U_C\gamma,S_C\Lambda}f
&=
\sum_{\lambda\in\Lambda}
\inner{f,\pi(S_C\lambda)U_C g}\,\pi(S_C\lambda)U_C\gamma  \\
&=
U_C
\sum_{\lambda\in\Lambda}
\inner{U_C^{-1}f,\pi(\lambda)g}\,\pi(\lambda)\gamma  \\
&=
U_C\,\Theta_{g,\gamma,\Lambda}\,U_C^{-1}f.
\end{aligned}
$$
This proves the identity.
\end{proof}
\begin{remark}[Scope of the transport results]
The preceding identity is an elementary instance of metaplectic covariance. Accordingly, the
preservation of frame bounds, exact duality, canonical duals, and approximate-duality errors below
should be understood as consequences of this unitary equivalence, not as independent phenomena.
The constructive point of the present paper is to combine this transport principle with tensor-product
dual pairs in order to obtain explicit compactly supported chirped windows, and to identify when
the resulting windows are genuinely non-separable as functions.
\end{remark}}

\begin{corollary}[Transport of frames and duality]\label{cor:transport_duality}
Let $\Lambda\subset \R^{2d}$ be a lattice, let $C$ be symmetric, and put
$$
S_C\Lambda:=\{S_C\lambda:\lambda\in\Lambda\}.
$$
If $\mathcal G(g,\Lambda)$ is a Gabor frame with frame bounds $A,B$, then
$\mathcal G(U_C g,S_C\Lambda)$ is a Gabor frame with the same bounds.

If $\mathcal G(\gamma,\Lambda)$ is a dual Gabor frame of $\mathcal G(g,\Lambda)$, then
$\mathcal G(U_C\gamma,S_C\Lambda)$ is a dual Gabor frame of $\mathcal G(U_C g,S_C\Lambda)$.
\end{corollary}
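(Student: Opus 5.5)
The plan is to derive both claims directly from Lemma~\ref{lem:chirp_covariance} and Proposition~\ref{prop:mixed_conjugacy}, using only the unitarity of $U_C$. First I note that $S_C$ is a linear bijection of $\R^{2d}$ with inverse $S_{-C}$. Hence $S_C\Lambda$ is again a lattice, and $\lambda\mapsto S_C\lambda$ is a bijection $\Lambda\to S_C\Lambda$. This lets me re-index every sum over $S_C\Lambda$ as a sum over $\Lambda$.

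For the frame statement, fix $f\in L^2(\R^d)$ and $\lambda=(x,\omega)\in\Lambda$. By Lemma~\ref{lem:chirp_covariance},
$$
\inner{f,\pi(S_C\lambda)U_Cg}=e^{-\pi i x^TCx}\inner{U_C^{-1}f,\pi(\lambda)g},
$$
as already computed in the proof of Proposition~\ref{prop:mixed_conjugacy}. Taking absolute values, the unimodular phase disappears. Re-indexing gives
$$
\sum_{\mu\in S_C\Lambda}|\inner{f,\pi(\mu)U_Cg}|^2=\sum_{\lambda\in\Lambda}|\inner{U_C^{-1}f,\pi(\lambda)g}|^2 .
$$
The frame inequality for $\mathcal G(g,\Lambda)$, applied to $U_C^{-1}f$, bounds the right-hand side between $A\|U_C^{-1}f\|_2^2$ and $B\|U_C^{-1}f\|_2^2$. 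Since $U_C$ is unitary, $\|U_C^{-1}f\|_2=\|f\|_2$, so the same bounds $A,B$ hold for $\mathcal G(U_Cg,S_C\Lambda)$. The same computation also shows that Bessel systems transport to Bessel systems with the same Bessel bound.

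For the duality statement, both $\mathcal G(g,\Lambda)$ and $\mathcal G(\gamma,\Lambda)$ are Bessel, since dual frames are frames, or at least Bessel by assumption. So Proposition~\ref{prop:mixed_conjugacy} applies to the ordered pair $(g,\gamma)$ and also to $(\gamma,g)$. Duality says $\Theta_{g,\gamma,\Lambda}=I=\Theta_{\gamma,g,\Lambda}$, where both reconstruction formulas in the definition are exactly these two mixed operators. Therefore
$$
\Theta_{U_Cg,U_C\gamma,S_C\Lambda}=U_C I U_C^{-1}=I,
$$
and likewise with the roles swapped. This yields both reconstruction formulas for the chirped pair.

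The only point needing care is convergence. The sums in Proposition~\ref{prop:mixed_conjugacy} are unconditionally convergent in $L^2$ because of the Bessel property, and the re-indexing $\lambda\mapsto S_C\lambda$ is a bijection, so no ordering issue arises. I expect no real obstacle beyond making this remark explicit.
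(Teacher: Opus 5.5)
Your proposal is correct and matches the paper's proof: the frame bounds come from the coefficient identity $\inner{f,\pi(S_C\lambda)U_Cg}=e^{-\pi i x^TCx}\inner{U_C^{-1}f,\pi(\lambda)g}$ together with unitarity of $U_C$, and duality comes from applying Proposition~\ref{prop:mixed_conjugacy} to both mixed reconstruction operators to get $U_C I U_C^{-1}=I$. Your explicit treatment of the re-indexing bijection $\lambda\mapsto S_C\lambda$ and of the Bessel hypothesis required by Proposition~\ref{prop:mixed_conjugacy} spells out details that the paper leaves implicit.
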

\begin{proof}
{The equality of frame bounds follows from the coefficient identity
$$
\inner{f,\pi(S_C\lambda)U_Cg}
=
e^{-\pi i x^TCx}\inner{U_C^{-1}f,\pi(\lambda)g},
\qquad \lambda=(x,\omega),
$$
together with the unitarity of $U_C$.}

{If $\mathcal G(\gamma,\Lambda)$ is dual to $\mathcal G(g,\Lambda)$, then the two mixed
reconstruction operators associated with $(g,\gamma)$ are the identity. By
Proposition~\ref{prop:mixed_conjugacy}, the corresponding transported mixed reconstruction
operators are
$$
U_C I U_C^{-1}=I.
$$
Hence $\mathcal G(U_C\gamma,S_C\Lambda)$ is dual to
$\mathcal G(U_Cg,S_C\Lambda)$.}
\end{proof}

{\begin{corollary}[Frame-operator and canonical-dual transport]\label{cor:frame_operator_conjugacy}
As a special case of Proposition~\ref{prop:mixed_conjugacy}, one has
$$
S_{U_C g,S_C\Lambda}
=
U_C\,S_{g,\Lambda}\,U_C^{-1}.
$$
Consequently,
$$
\widetilde{(U_C g)}
=
U_C\,\widetilde g.
$$
\end{corollary}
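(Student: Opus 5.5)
The plan is to specialize Proposition~\ref{prop:mixed_conjugacy} to the case $\gamma=g$. The frame operator is the mixed reconstruction operator with equal windows: by definition,
$$
S_{g,\Lambda}=\Theta_{g,g,\Lambda}
\qquad\text{and}\qquad
S_{U_Cg,S_C\Lambda}=\Theta_{U_Cg,U_Cg,S_C\Lambda}.
$$
The proposition only requires the Bessel property for $\mathcal G(g,\Lambda)$, and this holds because $\mathcal G(g,\Lambda)$ is a frame, which is implicit in speaking of $S_{g,\Lambda}$ and $\widetilde g$. Applying the proposition with $\gamma=g$ then gives directly
$$
S_{U_Cg,S_C\Lambda}=U_C\,S_{g,\Lambda}\,U_C^{-1}.
$$

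For the canonical dual, I first note that $\mathcal G(U_Cg,S_C\Lambda)$ is a frame by Corollary~\ref{cor:transport_duality}. Its frame operator is therefore invertible, and it is a product of invertible operators. Inverting the conjugacy identity gives
$$
S_{U_Cg,S_C\Lambda}^{-1}=U_C\,S_{g,\Lambda}^{-1}\,U_C^{-1}.
$$
Applying this to $U_Cg$, the factor $U_C^{-1}U_Cg$ collapses to $g$, so
$$
\widetilde{(U_Cg)}=U_C\,S_{g,\Lambda}^{-1}g=U_C\widetilde g.
$$

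There is no real obstacle here. The only points needing care are that the canonical dual of the chirped system must be taken with respect to the sheared lattice $S_C\Lambda$, and that the hypotheses of Proposition~\ref{prop:mixed_conjugacy} are met. Both are settled by the frame assumption on $\mathcal G(g,\Lambda)$ together with Corollary~\ref{cor:transport_duality}.
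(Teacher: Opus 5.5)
Your proposal is correct and follows the paper's proof: you specialize Proposition~\ref{prop:mixed_conjugacy} to $\gamma=g$, then invert the conjugated frame operator and apply it to $U_Cg$. Your appeal to Corollary~\ref{cor:transport_duality} for invertibility is harmless but unnecessary, since $U_C S_{g,\Lambda} U_C^{-1}$ is already a product of invertible operators.
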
}
{\begin{proof}
Apply Proposition~\ref{prop:mixed_conjugacy} with $\gamma=g$. The statement about the
canonical dual follows by inverting the conjugated frame operator.
\end{proof}}

\section{Explicit compactly supported dual pairs on chirp-sheared lattices}

{We now combine the tensor-product construction with the chirp transport principle.
This yields an explicit family of compactly supported smooth chirped windows on sheared
lower block-triangular lattices.}

\begin{theorem}\label{thm:main_constructive}
For each $j=1,\dots,d$, let $a_j,b_j>0$ and let $g_j,\gamma_j\in C_c^\infty(\R)$ be such that
$$
\mathcal G(g_j,a_j\Z\times b_j\Z)
\quad\text{and}\quad
\mathcal G(\gamma_j,a_j\Z\times b_j\Z)
$$
are dual Gabor frames for $L^2(\R)$.
Let $A=\operatorname{diag}(a_1,\dots,a_d)$, $B=\operatorname{diag}(b_1,\dots,b_d)$,
and let $C\in \R^{d\times d}$ be symmetric.

Define
$$
g(x):=e^{\pi i x^TCx}\prod_{j=1}^d g_j(x_j),
\qquad
\gamma(x):=e^{\pi i x^TCx}\prod_{j=1}^d \gamma_j(x_j),
\qquad x\in\R^d.
$$
Then the Gabor systems $\mathcal G(g,\Lambda_{A,B,C})$ and
$\mathcal G(\gamma,\Lambda_{A,B,C})$ are dual Gabor frames for $L^2(\R^d)$.
Moreover,
$$
g,\gamma\in C_c^\infty(\R^d).
$$
\end{theorem}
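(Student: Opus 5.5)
The plan is to combine Proposition~\ref{prop:tensor_duals} with Corollary~\ref{cor:transport_duality}. Set $g^{0}:=g_1\otimes\cdots\otimes g_d$ and $\gamma^{0}:=\gamma_1\otimes\cdots\otimes\gamma_d$, viewed as functions on $\R^d$. By Proposition~\ref{prop:tensor_duals}, $\mathcal G(g^0,\Lambda_{A,B,0})$ and $\mathcal G(\gamma^0,\Lambda_{A,B,0})$ are dual Gabor frames for $L^2(\R^d)$, and $g^0,\gamma^0\in C_c^\infty(\R^d)$. By the definitions, $g=U_Cg^0$ and $\gamma=U_C\gamma^0$.

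The next step is the lattice identification $S_C\Lambda_{A,B,0}=\Lambda_{A,B,C}$. A point of $\Lambda_{A,B,0}$ has the form $(Ak,B\ell)$ with $k,\ell\in\Z^d$. Its image is $S_C(Ak,B\ell)=(Ak,CAk+B\ell)$, and as $k,\ell$ range over $\Z^d$ these are exactly the points of $\Lambda_{A,B,C}$. Equivalently,
$$
\begin{pmatrix} I&0\\ C&I\end{pmatrix}\begin{pmatrix}A&0\\0&B\end{pmatrix}=\begin{pmatrix}A&0\\CA&B\end{pmatrix}.
$$

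With this in hand, Corollary~\ref{cor:transport_duality} applies to $\Lambda=\Lambda_{A,B,0}$. Since $\mathcal G(g^0,\Lambda_{A,B,0})$ is a frame (a dual frame pair consists of frames), $\mathcal G(U_Cg^0,S_C\Lambda_{A,B,0})=\mathcal G(g,\Lambda_{A,B,C})$ is a frame. Moreover $\mathcal G(\gamma,\Lambda_{A,B,C})$ is a dual Gabor frame of it, and the symmetric argument with the roles of $g^0$ and $\gamma^0$ exchanged shows that $\mathcal G(\gamma,\Lambda_{A,B,C})$ is itself a frame.

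One point needs care: the corollary's proof appeals to Proposition~\ref{prop:mixed_conjugacy}, which assumes Bessel systems. I would verify this hypothesis explicitly. The Bessel property of the tensor systems was established inside the proof of Proposition~\ref{prop:tensor_duals}, with bounds $\prod_j B_j^g$ and $\prod_j B_j^\gamma$. The coefficient identity from Lemma~\ref{lem:chirp_covariance} then shows that these bounds are preserved under $U_C$. With the Bessel hypothesis checked, both mixed reconstruction operators on $\Lambda_{A,B,C}$ are $U_C I U_C^{-1}=I$, which gives unconditional $L^2$ convergence of both reconstruction formulas.

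Regularity follows because $t\mapsto e^{\pi i t^TCt}$ is $C^\infty$ with modulus one. Multiplying $g^0,\gamma^0\in C_c^\infty(\R^d)$ by it keeps the functions smooth and does not change their supports, so $g,\gamma\in C_c^\infty(\R^d)$.

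I expect no genuine obstacle here. The only points requiring attention are the lattice identity $S_C\Lambda_{A,B,0}=\Lambda_{A,B,C}$ and confirming that the hypotheses of Corollary~\ref{cor:transport_duality} and Proposition~\ref{prop:mixed_conjugacy} hold; both are routine.
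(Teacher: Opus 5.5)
Your proposal is correct and follows the paper's proof step by step: tensorize via Proposition~\ref{prop:tensor_duals}, identify $S_C\Lambda_{A,B,0}=\Lambda_{A,B,C}$ together with $U_Cg^{(0)}=g$ and $U_C\gamma^{(0)}=\gamma$, apply Corollary~\ref{cor:transport_duality}, and get regularity from the smooth unimodular chirp factor. Your explicit check that the tensor systems are Bessel, so that Proposition~\ref{prop:mixed_conjugacy} applies, is bookkeeping the paper leaves implicit in the word ``frames''; it does not change the argument.
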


\begin{proof}
Set
$$
g^{(0)}:=g_1\otimes\cdots\otimes g_d,
\qquad
\gamma^{(0)}:=\gamma_1\otimes\cdots\otimes \gamma_d.
$$
By Proposition \ref{prop:tensor_duals}, the Gabor systems
$\mathcal G(g^{(0)},\Lambda_{A,B,0})$ and $\mathcal G(\gamma^{(0)},\Lambda_{A,B,0})$
are dual Gabor frames for $L^2(\R^d)$.

Next observe that
$$
\Lambda_{A,B,C}=S_C(\Lambda_{A,B,0}),
$$
because
$$
S_C(Ak,B\ell)=(Ak,\,CAk+B\ell).
$$
Also, by definition,
$$
U_C g^{(0)}(x)=e^{\pi i x^TCx}\prod_{j=1}^d g_j(x_j)=g(x),
$$
and similarly $U_C \gamma^{(0)}=\gamma$.

Applying Proposition \ref{cor:transport_duality} with $\Lambda=\Lambda_{A,B,0}$,
we conclude that $\mathcal G(g,\Lambda_{A,B,C})$ and
$\mathcal G(\gamma,\Lambda_{A,B,C})$ are dual Gabor frames.

Finally, multiplication by the smooth unimodular factor $e^{\pi i x^TCx}$ preserves
compact support and smoothness, hence $g,\gamma\in C_c^\infty(\R^d)$.
\end{proof}

{The previous theorem already yields a wide class of explicit higher-dimensional chirped windows.
The next proposition identifies the precise condition under which the chirp factor genuinely creates
non-separability of the window.}

\begin{proposition}\label{prop:nonseparable}
Assume that $C=(c_{jk})_{j,k=1}^d$ is symmetric and {the off-diagonal part of $C$ is nonzero, i.e.
there exist indices $j\neq k$ such that
$c_{jk}\neq0$.}
Assume moreover that each one-dimensional factor $g_m$ is nonzero on some nonempty open interval $I_m\subset \R$.
Then the function
$$
g(x)=e^{\pi i x^TCx}\prod_{m=1}^d g_m(x_m),
\qquad x=(x_1,\dots,x_d)\in \R^d,
$$
cannot be written in the form
$$
g(x)=\prod_{m=1}^d h_m(x_m)
$$
for one-variable functions $h_m$. In particular, $g$ is non-separable.
\end{proposition}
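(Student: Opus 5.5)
Assume, for contradiction, that $g(x)=\prod_{m=1}^d h_m(x_m)$ for all $x\in\R^d$. Fix indices $j\neq k$ with $c_{jk}\neq 0$. The idea is to restrict everything to the open box $Q:=I_1\times\cdots\times I_d$, where $\prod_m g_m(x_m)\neq 0$. There we can divide, obtaining on $Q$
$$
e^{\pi i x^TCx}=\prod_{m=1}^d \frac{h_m(x_m)}{g_m(x_m)}=:\prod_{m=1}^d \phi_m(x_m).
$$
The left-hand side never vanishes, so every $\phi_m$ is nonzero on $I_m$. This is why the hypothesis on the intervals $I_m$ is needed; we do not need $g_m$ to be nonvanishing everywhere.

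The key step is a multiplicative ``mixed difference'' that kills every one-variable factor. Fix $x\in Q$ and small increments $s,t$ with $x+se_j+te_k\in Q$ (recall $Q$ is open). Set $F(x)=e^{\pi i x^TCx}$ and form
$$
\frac{F(x+se_j+te_k)\,F(x)}{F(x+se_j)\,F(x+te_k)}.
$$
In the product representation each factor $\phi_m(x_m)$ appears equally often in numerator and denominator, so this quotient equals $1$.

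Now compute the same quotient directly from the quadratic form. The exponent is $\pi i$ times the second mixed difference of $x^TCx$, and a direct expansion using the symmetry of $C$ gives $2c_{jk}st$. The quotient therefore equals $e^{2\pi i c_{jk}st}$. Hence $e^{2\pi i c_{jk}st}=1$, i.e.\ $c_{jk}st\in\Z$, for all sufficiently small $s,t$.

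The main point of care is that the identity only yields $c_{jk}st\in\Z$, not $c_{jk}st=0$. I handle this by continuity. The map $(s,t)\mapsto c_{jk}st$ is continuous on a connected neighbourhood of $(0,0)$ and integer-valued there, so it is constant, and it equals $0$ at the origin. Choosing $s,t\neq0$ small then forces $c_{jk}=0$, contradicting the hypothesis. This rules out any product representation, even with arbitrary, possibly nonmeasurable, factors $h_m$, since only pointwise identities on $Q$ were used. Therefore $g$ is non-separable in the sense of Definition~\ref{def:separable}.
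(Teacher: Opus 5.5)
Your proof is correct and follows essentially the same route as the paper. Both restrict to a box where the $g_m$ do not vanish, use the four-point multiplicative identity that any separable function satisfies, and compute the mixed second difference of $x^TCx$ as $2c_{jk}(u_1-u_2)(v_1-v_2)$ (your $2c_{jk}st$); the only cosmetic differences are that you divide out $\prod_m g_m$ first rather than factoring out the nonzero prefactor afterwards, and you conclude by connectedness where the paper simply chooses increments with $0<|c_{jk}(u_1-u_2)(v_1-v_2)|<1$.
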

\begin{remark}[Why the off-diagonal assumption is necessary]
{The assumption that $C$ has a nonzero off-diagonal entry cannot be replaced by the weaker condition
$C\neq 0$. Indeed, if $C$ is diagonal, say
$$
C=\operatorname{diag}(c_1,\dots,c_d),
\qquad C\neq 0,
$$
then
$$
x^T Cx=\sum_{m=1}^d c_m x_m^2.
$$
Consequently,
$$
e^{\pi i x^T Cx}
=
\prod_{m=1}^d e^{\pi i c_m x_m^2}.
$$
Thus, for a separable tensor-product window
$$
g(x)=\prod_{m=1}^d g_m(x_m),
$$
one obtains
$$
U_C g(x)
=
e^{\pi i x^T Cx}\prod_{m=1}^d g_m(x_m)
=
\prod_{m=1}^d
\left(e^{\pi i c_m x_m^2}g_m(x_m)\right).
$$
Hence $U_C g$ is still separable. A diagonal chirp only modifies the individual one-dimensional
factors; it does not create any coupling between distinct coordinates.}

{Therefore the condition $C\neq 0$ is not sufficient to generate a genuinely non-separable window.
The relevant condition is precisely that the off-diagonal part of $C$ be nonzero, equivalently that
there exist indices $j\neq k$ such that $c_{jk}\neq 0$.}
\end{remark}
\begin{proof}
Suppose, by contradiction, that there exist one-variable functions $h_1,\dots,h_d$ such that
$$
g(x)=\prod_{m=1}^d h_m(x_m), \qquad x\in \R^d.
$$
Choose indices $j\neq k$ such that $c_{jk}\neq 0$.
By assumption, for each $m=1,\dots,d$ there exists a nonempty open interval $I_m\subset \R$ on which $g_m$ does not vanish.
Fix arbitrary points $\xi_m\in I_m$ for all $m\neq j,k$.

For $u\in I_j$ and $v\in I_k$, define
$$
x(u,v):=(x_1,\dots,x_d)\in \R^d
$$
by setting
$$
x_j=u,\qquad x_k=v,\qquad x_m=\xi_m \quad \text{for } m\neq j,k.
$$
Since $g$ is assumed separable, the two-variable function
$$
F(u,v):=g(x(u,v))
$$
must have the form
$$
F(u,v)=A\,h_j(u)h_k(v),
\qquad
A:=\prod_{m\neq j,k} h_m(\xi_m),
$$
and therefore it satisfies the multiplicative identity
$$
F(u_1,v_1)F(u_2,v_2)=F(u_1,v_2)F(u_2,v_1)
\qquad
\text{for all } u_1,u_2\in I_j,\ v_1,v_2\in I_k.
$$
We now compute the same quantity using the explicit formula for $g$.

Set
$$
G_0:=\prod_{m\neq j,k} g_m(\xi_m).
$$
Then, for $u\in I_j$ and $v\in I_k$,
$$
F(u,v)=e^{\pi i\, x(u,v)^T C x(u,v)}\,G_0\,g_j(u)g_k(v).
$$
Hence
\begin{align*}
&F(u_1,v_1)F(u_2,v_2)-F(u_1,v_2)F(u_2,v_1)\\
&=
G_0^2\,g_j(u_1)g_j(u_2)g_k(v_1)g_k(v_2)
\Big(
e^{\pi i\,[x(u_1,v_1)^TCx(u_1,v_1)+x(u_2,v_2)^TCx(u_2,v_2)]}\\
&\hspace{7em}
-
e^{\pi i\,[x(u_1,v_2)^TCx(u_1,v_2)+x(u_2,v_1)^TCx(u_2,v_1)]}
\Big).
\end{align*}
Since each $g_m$ is nonzero on $I_m$, the prefactor
$$
G_0^2\,g_j(u_1)g_j(u_2)g_k(v_1)g_k(v_2)
$$
is nonzero. Therefore the multiplicative identity would force
$$
e^{\pi i\,[x(u_1,v_1)^TCx(u_1,v_1)+x(u_2,v_2)^TCx(u_2,v_2)]}
=
e^{\pi i\,[x(u_1,v_2)^TCx(u_1,v_2)+x(u_2,v_1)^TCx(u_2,v_1)]}.
$$
Equivalently,
$$
e^{\pi i \Delta}=1,
$$
where
$$
\Delta
:=
x(u_1,v_1)^TCx(u_1,v_1)+x(u_2,v_2)^TCx(u_2,v_2)
-x(u_1,v_2)^TCx(u_1,v_2)-x(u_2,v_1)^TCx(u_2,v_1).
$$
Because
$$
x^TCx=\sum_{r=1}^d c_{rr}x_r^2+2\sum_{1\le r<s\le d} c_{rs}x_rx_s,
$$
all terms depending only on one coordinate cancel in the alternating sum defining $\Delta$, and so do all mixed terms except the $(j,k)$-term. Thus
$$
\Delta
=
2c_{jk}\big(u_1v_1+u_2v_2-u_1v_2-u_2v_1\big)
=
2c_{jk}(u_1-u_2)(v_1-v_2).
$$
Therefore separability would imply
$$
e^{2\pi i\,c_{jk}(u_1-u_2)(v_1-v_2)}=1
\qquad
\text{for all } u_1,u_2\in I_j,\ v_1,v_2\in I_k.
$$
We now choose $u_1,u_2\in I_j$ and $v_1,v_2\in I_k$ such that
$$
0<\big|c_{jk}(u_1-u_2)(v_1-v_2)\big|<1.
$$
This is possible because $I_j$ and $I_k$ are open intervals and $c_{jk}\neq 0$.
For such a choice, the real number $c_{jk}(u_1-u_2)(v_1-v_2)$ is nonzero and cannot be an integer, hence
$$
e^{2\pi i\,c_{jk}(u_1-u_2)(v_1-v_2)}\neq 1,
$$
a contradiction. Therefore $g$ cannot be written as a product of one-variable functions. In particular, $g$ is non-separable.
\end{proof}

{The preceding proposition is qualitative. We now record a quantitative version of the same
phenomenon. The point is that the off-diagonal coefficient $c_{jk}$ can be detected by a
four-point multiplicative obstruction which vanishes identically for separable functions.}
\begin{theorem}[Exact four-point obstruction generated by off-diagonal chirps]
\label{thm:four_point_obstruction}
{Let $d\ge 2$, let $C=(c_{mn})_{m,n=1}^d$ be symmetric, and let
$$
g_C(x)
=
e^{\pi i x^T Cx}\prod_{m=1}^d g_m(x_m),
\qquad x\in\R^d.
$$
Fix two distinct indices $j\neq k$. Assume that, for each $m$, the factor $g_m$
is nowhere zero on a nonempty open interval $I_m\subset\R$. Fix points
$\xi_m\in I_m$ for all $m\neq j,k$, and define the two-variable restriction
$$
F_\xi(u,v)
:=
g_C(x_\xi(u,v)),
$$
where $x_\xi(u,v)\in\R^d$ is given by
$$
(x_\xi(u,v))_j=u,\qquad
(x_\xi(u,v))_k=v,\qquad
(x_\xi(u,v))_m=\xi_m\quad (m\neq j,k).
$$
Then $F_\xi$ is nowhere zero on $I_j\times I_k$, and for every
$u_1,u_2\in I_j$, $v_1,v_2\in I_k$, one has
$$
\frac{
F_\xi(u_1,v_1)F_\xi(u_2,v_2)
}{
F_\xi(u_1,v_2)F_\xi(u_2,v_1)
}
=
e^{2\pi i\,c_{jk}(u_1-u_2)(v_1-v_2)}.
$$
In particular, the four-point quotient is identically equal to $1$ for every
separable function, whereas for the chirped tensor-product window it detects exactly
the off-diagonal coefficient $c_{jk}$.}

{Consequently, for every pair of bounded intervals
$J_j\subset I_j$, $J_k\subset I_k$ of positive length, the normalized rectangular defect
$$
\mathcal R_{jk}(g_C;J_j,J_k)^2
:=
\frac{1}{|J_j|^2|J_k|^2}
\int_{J_j^2}\int_{J_k^2}
\left|
\frac{
F_\xi(u_1,v_1)F_\xi(u_2,v_2)
}{
F_\xi(u_1,v_2)F_\xi(u_2,v_1)
}
-1
\right|^2
\,dv_1\,dv_2\,du_1\,du_2
$$
satisfies the exact formula
$$
\mathcal R_{jk}(g_C;J_j,J_k)^2
=
\frac{4}{|J_j|^2|J_k|^2}
\int_{J_j^2}\int_{J_k^2}
\sin^2\!\left(
\pi c_{jk}(u_1-u_2)(v_1-v_2)
\right)
\,dv_1\,dv_2\,du_1\,du_2.
$$
Hence
$$
\mathcal R_{jk}(g_C;J_j,J_k)=0
\quad\Longleftrightarrow\quad
c_{jk}=0.
$$
Moreover, if $L_j:=|J_j|$ and $L_k:=|J_k|$, then, as $c_{jk}\to0$,
$$
\mathcal R_{jk}(g_C;J_j,J_k)^2
=
\frac{\pi^2}{9}\,c_{jk}^2 L_j^2L_k^2
+
O\!\left(c_{jk}^4 L_j^4L_k^4\right).
$$}
\end{theorem}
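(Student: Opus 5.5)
The plan is to reuse the algebra from the proof of Proposition~\ref{prop:nonseparable}, now keeping track of the ratio rather than arguing by contradiction.

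\textbf{Step 1 (non-vanishing and the quotient).} On $I_j\times I_k$ we have
$$
F_\xi(u,v)=e^{\pi i\,x_\xi(u,v)^TCx_\xi(u,v)}\,G_0\,g_j(u)g_k(v),\qquad G_0:=\prod_{m\neq j,k}g_m(\xi_m).
$$
Every factor is nonzero: the exponential is unimodular, $G_0\neq0$ because $\xi_m\in I_m$, and $g_j(u),g_k(v)\neq0$. Hence $F_\xi$ is nowhere zero and the four-point quotient is well defined. In the quotient, $G_0^2\,g_j(u_1)g_j(u_2)g_k(v_1)g_k(v_2)$ appears in both numerator and denominator and cancels, leaving $e^{\pi i\Delta}$. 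Here $\Delta$ is the alternating sum of quadratic forms already computed in the proof of Proposition~\ref{prop:nonseparable}, namely $\Delta=2c_{jk}(u_1-u_2)(v_1-v_2)$. This gives the claimed formula. For a separable nowhere-zero function $h(x)=\prod_m h_m(x_m)$, the restriction is $\mathrm{const}\cdot h_j(u)h_k(v)$, and the quotient is trivially $1$.

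\textbf{Step 2 (the exact defect formula).} Write $\theta=\pi c_{jk}(u_1-u_2)(v_1-v_2)$. Then
$$
|e^{2i\theta}-1|^2=2-2\cos 2\theta=4\sin^2\theta.
$$
Substituting this into the definition of $\mathcal R_{jk}^2$ yields the stated integral; the integrand is bounded, so the integral is finite.

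For the equivalence $\mathcal R_{jk}=0\iff c_{jk}=0$: if $c_{jk}=0$, the integrand vanishes identically. Conversely, if $c_{jk}\neq0$, then for points with $0<|c_{jk}(u_1-u_2)(v_1-v_2)|<1$ the sine is nonzero. This set is open and nonempty in $J_j^2\times J_k^2$ (take $u_1,u_2$ and $v_1,v_2$ close but distinct). A continuous nonnegative integrand that is positive on a nonempty open set has positive integral.

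\textbf{Step 3 (small-$c_{jk}$ asymptotics).} Use $\sin^2\theta=\theta^2+O(\theta^4)$, with the remainder uniform because $|\theta|\le \pi|c_{jk}|L_jL_k$. This gives
$$
\mathcal R^2=\frac{4\pi^2c_{jk}^2}{L_j^2L_k^2}\Big(\int_{J_j^2}(u_1-u_2)^2\Big)\Big(\int_{J_k^2}(v_1-v_2)^2\Big)+O(c_{jk}^4L_j^4L_k^4).
$$
By translation invariance, for an interval of length $L$,
$$
\int_{[0,L]^2}(u_1-u_2)^2\,du_1\,du_2=2\cdot\frac{L^4}{3}-2\Big(\frac{L^2}{2}\Big)^2=\frac{L^4}{6}.
$$
The leading term is therefore
$$
\frac{4\pi^2c_{jk}^2}{L_j^2L_k^2}\cdot\frac{L_j^4}{6}\cdot\frac{L_k^4}{6}=\frac{\pi^2}{9}c_{jk}^2L_j^2L_k^2,
$$
matching the statement.

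For the error term, note that $\int\theta^4$ is of order $c_{jk}^4L_j^8L_k^8$. After the normalization $1/(L_j^2L_k^2)$ this becomes $O(c_{jk}^4L_j^6L_k^6)$, not $O(c_{jk}^4L_j^4L_k^4)$. So the stated error should be read as: the relative error is $O(c_{jk}^2L_j^2L_k^2)$, i.e.\ the absolute error is $O\big(c_{jk}^4L_j^6L_k^6\big)$. These agree with the stated bound when $L_j,L_k$ are fixed and only $c_{jk}\to0$, which is the regime in the statement. In the writeup I will make the implicit constant's dependence on $L_j,L_k$ explicit, or state the bound with $L$'s fixed.

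The main place requiring care is this bookkeeping of the error term. Everything else is a direct computation inherited from the proof of Proposition~\ref{prop:nonseparable}.
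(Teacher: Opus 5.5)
Your Steps 1 and 2 and the leading-order computation in Step 3 are essentially the paper's proof. There are two cosmetic differences. The paper recomputes the cancellation via $x_\xi^TCx_\xi=c_{jj}u^2+c_{kk}v^2+2c_{jk}uv+\alpha u+\beta v+\rho$ rather than citing $\Delta$ from Proposition~\ref{prop:nonseparable}. It also proves $\mathcal R_{jk}=0\Rightarrow c_{jk}=0$ by identical vanishing of the continuous nonnegative integrand, which is the contrapositive of your open-set argument.

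The problem is your last paragraph. The paper's remainder $O(c_{jk}^4L_j^4L_k^4)$ is correct, and your proposed replacement rests on a miscount. For an interval $J$ of length $L$, $\int_{J^2}(s-t)^4\,ds\,dt=L^6/15$: the integrand is at most $L^4$ on a set of measure $L^2$, so nothing of order $L^8$ appears. Hence $\int_{J_j^2\times J_k^2}\theta^4=\pi^4c_{jk}^4L_j^6L_k^6/225$, and after the prefactor $4/(L_j^2L_k^2)$ this is of order $c_{jk}^4L_j^4L_k^4$. Combine this with the global inequality $|\sin^2z-z^2|\le z^4/3$, which follows from $z^2-\sin^2z=(z-\sin z)(z+\sin z)$. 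You then get
\[
\Bigl|\mathcal R_{jk}(g_C;J_j,J_k)^2-\tfrac{\pi^2}{9}\,c_{jk}^2L_j^2L_k^2\Bigr|\le\tfrac{4\pi^4}{675}\,c_{jk}^4L_j^4L_k^4,
\]
with an absolute constant and no smallness assumption on $c_{jk}$.

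A scaling check confirms this. Let $u_0,v_0$ be the left endpoints of $J_j,J_k$ and substitute $u_i=u_0+L_js_i$, $v_i=v_0+L_kt_i$. The exact formula becomes
\[
\mathcal R_{jk}^2=4\int_{[0,1]^4}\sin^2\bigl(\pi\varepsilon(s_1-s_2)(t_1-t_2)\bigr)\,ds_1\,ds_2\,dt_1\,dt_2,\qquad \varepsilon=c_{jk}L_jL_k,
\]
so $\mathcal R_{jk}^2$ depends on $\varepsilon$ alone. The $z^4$-coefficient of $\sin^2z$ is exactly $-1/3$, so the quartic term equals $-\frac{4\pi^4}{675}c_{jk}^4L_j^4L_k^4$. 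Consequently, a bound $O(c_{jk}^4L_j^6L_k^6)$ with a constant independent of the intervals is false as $L_jL_k\to0$.

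Your sentence also contradicts itself. Its first half, relative error $O(c_{jk}^2L_j^2L_k^2)$, is correct. Multiplying it by the leading term $\frac{\pi^2}{9}c_{jk}^2L_j^2L_k^2$ gives $O(c_{jk}^4L_j^4L_k^4)$, not the $L^6$ version. Keep the statement as written; only the justification of the remainder needs the correct computation.
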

\begin{proof}
{Since each $g_m$ is nowhere zero on $I_m$, the restricted function
$F_\xi$ is nowhere zero on $I_j\times I_k$. We now compute the four-point quotient.}

{Write $x=x_\xi(u,v)$. The quadratic form $x^TCx$, restricted to the variables
$(u,v)$, can be decomposed as
$$
x_\xi(u,v)^T C x_\xi(u,v)
=
c_{jj}u^2+c_{kk}v^2+2c_{jk}uv+\alpha u+\beta v+\rho,
$$
where $\alpha,\beta,\rho$ depend on $C$ and on the fixed coordinates
$\xi_m$, but not on $u$ and $v$ in any other way. Hence
$$
F_\xi(u,v)
=
e^{\pi i(c_{jj}u^2+c_{kk}v^2+2c_{jk}uv+\alpha u+\beta v+\rho)}
\,g_j(u)g_k(v)
\prod_{m\neq j,k}g_m(\xi_m).
$$
Consider now
$$
Q(u_1,u_2,v_1,v_2)
:=
\frac{
F_\xi(u_1,v_1)F_\xi(u_2,v_2)
}{
F_\xi(u_1,v_2)F_\xi(u_2,v_1)
}.
$$
All one-variable factors cancel:
the factors involving $g_j$, $g_k$, the fixed coordinates, the terms
$c_{jj}u^2$, $c_{kk}v^2$, $\alpha u$, $\beta v$, and $\rho$
appear equally in numerator and denominator. The only surviving term is the mixed
term $2c_{jk}uv$. Therefore
$$
\begin{aligned}
Q(u_1,u_2,v_1,v_2)
&=
e^{
\pi i\,2c_{jk}
\big(
u_1v_1+u_2v_2-u_1v_2-u_2v_1
\big)
}\\
&=
e^{
2\pi i\,c_{jk}(u_1-u_2)(v_1-v_2)
}.
\end{aligned}
$$
This proves the first assertion.}

{We also recall the elementary reason why this quotient is an obstruction to
separability. Let $H$ be any nowhere-zero separable function on a rectangle
$J_j\times J_k$, say
$$
H(u,v)=a(u)b(v),
\qquad u\in J_j,\ v\in J_k,
$$
for some one-variable functions $a$ and $b$. Then, for all
$u_1,u_2\in J_j$ and $v_1,v_2\in J_k$,
$$
\frac{
H(u_1,v_1)H(u_2,v_2)
}{
H(u_1,v_2)H(u_2,v_1)
}
=
\frac{
a(u_1)b(v_1)a(u_2)b(v_2)
}{
a(u_1)b(v_2)a(u_2)b(v_1)
}
=
1.
$$
Thus the quantity
$$
\frac{
H(u_1,v_1)H(u_2,v_2)
}{
H(u_1,v_2)H(u_2,v_1)
}
-1
$$
vanishes identically for separable functions. Applied to $H=F_\xi$, the formula
computed above shows that the obstruction is generated exactly by the mixed chirp
coefficient $c_{jk}$.}

{Using the identity
$$
|e^{i\theta}-1|^2=4\sin^2(\theta/2),
$$
with
$$
\theta=2\pi c_{jk}(u_1-u_2)(v_1-v_2),
$$
we obtain the exact formula
$$
\mathcal R_{jk}(g_C;J_j,J_k)^2
=
\frac{4}{|J_j|^2|J_k|^2}
\int_{J_j^2}\int_{J_k^2}
\sin^2\!\left(
\pi c_{jk}(u_1-u_2)(v_1-v_2)
\right)
\,dv_1\,dv_2\,du_1\,du_2.
$$
This quantity is zero if $c_{jk}=0$. Conversely, suppose it is zero. Since the integrand is
continuous and nonnegative, it must vanish identically on
$J_j^2\times J_k^2$. Hence
$$
\sin\!\left(
\pi c_{jk}(u_1-u_2)(v_1-v_2)
\right)=0
$$
for all $u_1,u_2\in J_j$ and $v_1,v_2\in J_k$. Because $J_j$ and $J_k$
have positive length, the product
$(u_1-u_2)(v_1-v_2)$ ranges over a nontrivial interval containing $0$.
The only way the above sine can vanish identically on such a continuum is $c_{jk}=0$.
Thus
$$
\mathcal R_{jk}(g_C;J_j,J_k)=0
\quad\Longleftrightarrow\quad
c_{jk}=0.
$$
Finally, for $c_{jk}\to0$, we use
$$
\sin^2 z=z^2+O(z^4).
$$
Therefore
$$
\begin{aligned}
\mathcal R_{jk}(g_C;J_j,J_k)^2
&=
\frac{4\pi^2c_{jk}^2}{|J_j|^2|J_k|^2}
\left(
\int_{J_j^2}(u_1-u_2)^2\,du_1\,du_2
\right)
\left(
\int_{J_k^2}(v_1-v_2)^2\,dv_1\,dv_2
\right)\\
&\qquad
+
O\!\left(c_{jk}^4 L_j^4L_k^4\right).
\end{aligned}
$$
For an interval $J$ of length $L$,
$$
\int_{J^2}(s-t)^2\,ds\,dt=\frac{L^4}{6}.
$$
Hence
$$
\mathcal R_{jk}(g_C;J_j,J_k)^2
=
\frac{4\pi^2c_{jk}^2}{L_j^2L_k^2}
\frac{L_j^4}{6}\frac{L_k^4}{6}
+
O\!\left(c_{jk}^4 L_j^4L_k^4\right),
$$
that is,
$$
\mathcal R_{jk}(g_C;J_j,J_k)^2
=
\frac{\pi^2}{9}\,c_{jk}^2L_j^2L_k^2
+
O\!\left(c_{jk}^4L_j^4L_k^4\right).
$$
This completes the proof.}
\end{proof}
\begin{remark}
Theorem \ref{thm:main_constructive} should be viewed as a constructive complement to the
geometric higher-dimensional window constructions available for special lattice classes.
Its strength is that once an exact dual pair is available on a separable lattice, the passage to the
sheared lattice $\Lambda_{A,B,C}$ is completely explicit and preserves compact support and smoothness.
\end{remark}

\section{Approximate duals and approximation of the canonical dual}
This section is motivated by two complementary ideas from the literature. First, exact compactly supported dual windows may be approximated in a controlled way, while remaining within the class of exact duals, as shown in \citep{Sto22}. Second, one may also work with approximately dual systems and quantify the resulting reconstruction error, as in \citep{CJKK18}. {The point of this section is not to introduce a new approximation mechanism, but to spell out
how existing exact-dual and approximate-dual constructions behave under the chirp transport
principle of Proposition~\ref{prop:mixed_conjugacy}.}

\begin{corollary}[Transport of approximate duals]\label{cor:approx_dual_transport}
Let $\Lambda\subset \R^{2d}$ be a lattice and let $C$ be symmetric.
If $(\mathcal G(g,\Lambda),\mathcal G(\gamma,\Lambda))$ is an approximately dual pair, then
$$
(\mathcal G(U_C g,S_C\Lambda),\mathcal G(U_C\gamma,S_C\Lambda))
$$
is also an approximately dual pair, with exactly the same reconstruction error:
$$
\|I-\Theta_{U_C g,U_C\gamma,S_C\Lambda}\|
=
\|I-\Theta_{g,\gamma,\Lambda}\|\, ,
$$
according to Remark \ref{rem:operator_norm_error}.
\end{corollary}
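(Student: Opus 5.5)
The corollary follows directly from the unitary conjugacy identity of Proposition~\ref{prop:mixed_conjugacy}. The plan has three steps: check that the Bessel hypotheses of that proposition hold, write $I-\Theta$ as a conjugate, and use unitary invariance of the operator norm.

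First, I will check the Bessel property for the transported systems. An approximately dual pair involves $\Theta_{g,\gamma,\Lambda}$ as a bounded operator, and throughout the paper the relevant systems are Bessel. I therefore take $\mathcal G(g,\Lambda)$ and $\mathcal G(\gamma,\Lambda)$ to be Bessel, so Proposition~\ref{prop:mixed_conjugacy} applies. The transported systems are Bessel with the same bounds, by the coefficient identity
$$
\inner{f,\pi(S_C\lambda)U_Cg}=e^{-\pi i x^TCx}\inner{U_C^{-1}f,\pi(\lambda)g},\qquad \lambda=(x,\omega)\in\Lambda,
$$
used in the proof of Corollary~\ref{cor:transport_duality}. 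Together with the unitarity of $U_C$, this gives
$$
\sum_{\lambda\in\Lambda}|\inner{f,\pi(S_C\lambda)U_Cg}|^2=\sum_{\lambda\in\Lambda}|\inner{U_C^{-1}f,\pi(\lambda)g}|^2\le B_g\|f\|_2^2,
$$
and the same argument applies to $\gamma$. I also note that $\lambda\mapsto S_C\lambda$ is a bijection of $\Lambda$ onto $S_C\Lambda$, so summing over $S_C\Lambda$ is the same as summing over $\Lambda$ with reindexed terms.

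Second, I will apply Proposition~\ref{prop:mixed_conjugacy} and use $U_C I U_C^{-1}=I$ to get
$$
I-\Theta_{U_Cg,U_C\gamma,S_C\Lambda}=U_C\bigl(I-\Theta_{g,\gamma,\Lambda}\bigr)U_C^{-1}.
$$

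Third, since $U_C$ and $U_C^{-1}=U_C^{*}$ are unitary, conjugation by $U_C$ preserves operator norms. Indeed $\|U_CTU_C^{-1}\|\le\|T\|$, and applying this to the inverse conjugation gives the reverse inequality. Hence the two reconstruction errors coincide, and in particular the transported error is still $<1$. So the transported pair is approximately dual, with reconstruction error in the sense of Remark~\ref{rem:operator_norm_error} equal to the original one.

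The only point needing care is the first step, that is, making sure the Bessel hypotheses of Proposition~\ref{prop:mixed_conjugacy} are available. If the definition of an approximately dual pair is read without an explicit Bessel assumption, I would add it as the standing convention, as in \citep{CJKK18}.
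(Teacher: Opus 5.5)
Your proposal is correct and follows the paper's proof: apply Proposition~\ref{prop:mixed_conjugacy} to get $I-\Theta_{U_Cg,U_C\gamma,S_C\Lambda}=U_C(I-\Theta_{g,\gamma,\Lambda})U_C^{-1}$, then use the unitary invariance of the operator norm. Your explicit treatment of the Bessel hypotheses is extra care that the paper leaves implicit; only the original systems need to be Bessel for that proposition, so checking the transported systems is harmless but not required.
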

{\begin{proof}
By Proposition~\ref{prop:mixed_conjugacy},
$$
I-\Theta_{U_C g,U_C\gamma,S_C\Lambda}
=
U_C\,(I-\Theta_{g,\gamma,\Lambda})\,U_C^{-1}.
$$
Since $U_C$ is unitary, operator norm is preserved under conjugation.
\end{proof}}

The preceding corollary shows that any explicitly constructed approximate dual on a separable lattice can be transferred to the corresponding sheared lattice without deterioration of the reconstruction error.

We next state a canonical-dual approximation result in the exact-dual regime.

\begin{theorem}[Transport of exact-dual iterations]\label{thm:iteration_transport}
Let $\Lambda\subset \R^{2d}$ be a lattice, let $C$ be symmetric, and let
$g,\gamma^{(0)}\in L^2(\R^d)$ be such that $\mathcal G(\gamma^{(0)},\Lambda)$ is a dual Gabor frame of
$\mathcal G(g,\Lambda)$.
Assume that $\rho>0$ is such that
$$
\|I-\rho S_{g,\Lambda}\|<1.
$$
Define recursively
$$
\gamma^{(n+1)}
:=
\rho g + (I-\rho S_{g,\Lambda})\gamma^{(n)},
\qquad n\geq 0.
$$
Set
$$
g_C:=U_C g,
\qquad
\Gamma_C^{(n)}:=U_C\gamma^{(n)},
\qquad
\Lambda_C:=S_C\Lambda.
$$
Then, for every $n\geq 0$,
$$
\Gamma_C^{(n+1)}
=
\rho g_C + (I-\rho S_{g_C,\Lambda_C})\Gamma_C^{(n)}.
$$
Moreover, if each $\gamma^{(n)}$ is an exact dual window of $\mathcal G(g,\Lambda)$, then each
$\Gamma_C^{(n)}$ is an exact dual window of $\mathcal G(g_C,\Lambda_C)$.
Moreover,
$$
\gamma^{(n)}\to \widetilde g
\qquad\text{and}\qquad
\Gamma_C^{(n)}\to \widetilde{g_C}
\quad\text{in }L^2(\R^d).
$$
\end{theorem}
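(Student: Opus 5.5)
The plan is to derive all three assertions from the conjugacy $S_{g_C,\Lambda_C}=U_C S_{g,\Lambda}U_C^{-1}$ of Corollary~\ref{cor:frame_operator_conjugacy}, together with the unitarity of $U_C$. Before invoking that corollary, I will check that $\mathcal G(g,\Lambda)$ is a frame. It is Bessel because it has a dual. The reconstruction formula $f=\sum_\lambda\inner{f,\pi(\lambda)\gamma^{(0)}}\pi(\lambda)g$ then gives a lower bound via Cauchy--Schwarz,
\[
\|f\|_2^2\le \sqrt{B_{\gamma^{(0)}}}\,\|f\|_2\Big(\sum_\lambda|\inner{f,\pi(\lambda)g}|^2\Big)^{1/2}.
\]
So $S_{g,\Lambda}$ is bounded, positive and invertible, and $\widetilde g$ is defined. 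Corollary~\ref{cor:transport_duality} shows that $\mathcal G(g_C,\Lambda_C)$ is also a frame, and that $\widetilde{g_C}=U_C\widetilde g$.

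\emph{Step 1 (transported recursion).} I apply $U_C$ to the defining recursion and insert $U_C^{-1}U_C$ in front of $\gamma^{(n)}$:
\[
U_C\gamma^{(n+1)}=\rho\,U_Cg+\big(I-\rho\,U_CS_{g,\Lambda}U_C^{-1}\big)U_C\gamma^{(n)}.
\]
Replacing $U_CS_{g,\Lambda}U_C^{-1}$ by $S_{g_C,\Lambda_C}$ gives the stated recursion for $\Gamma_C^{(n)}$. This is a purely algebraic identity, valid for every $n\ge0$.

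\emph{Step 2 (exact duality).} For each $n$ with $\gamma^{(n)}$ an exact dual of $\mathcal G(g,\Lambda)$, I apply the second part of Corollary~\ref{cor:transport_duality} with $\gamma=\gamma^{(n)}$. This shows that $\Gamma_C^{(n)}$ is an exact dual of $\mathcal G(g_C,\Lambda_C)$. I could also verify the hypothesis directly by induction. If $\gamma^{(n)}$ is dual, then $\Theta_{\gamma^{(n)},g,\Lambda}=I$. Write $\Theta_{\gamma,g}f=\sum_\lambda\inner{f,\pi(\lambda)\gamma}\pi(\lambda)g$, so the map $\gamma\mapsto\Theta_{\gamma,g}$ is conjugate-linear and $\Theta_{g,g}=S$. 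For the new window $\gamma^{(n+1)}=\rho g+\gamma^{(n)}-\rho S\gamma^{(n)}$, the key point is the commutation $\pi(\lambda)S=S\pi(\lambda)$, which gives
\[
\Theta_{S\gamma^{(n)},g}=\Theta_{\gamma^{(n)},g}\,S=S .
\]
Hence $\Theta_{\gamma^{(n+1)},g}=\rho S+I-\rho S=I$. The other-order identity follows by taking adjoints. This induction is optional, since the statement only assumes the hypothesis.

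\emph{Step 3 (convergence).} I expect this to be the main point, though it is routine. The fixed point of $\gamma\mapsto\rho g+(I-\rho S)\gamma$ is $\widetilde g$, because $\rho g=\rho S\widetilde g$ gives
\[
\widetilde g=\rho g+(I-\rho S)\widetilde g .
\]
Subtracting this from the recursion yields $\gamma^{(n)}-\widetilde g=(I-\rho S)^n(\gamma^{(0)}-\widetilde g)$. Therefore
\[
\|\gamma^{(n)}-\widetilde g\|_2\le\|I-\rho S\|^n\,\|\gamma^{(0)}-\widetilde g\|_2\to0 .
\]
Finally, $\|\Gamma_C^{(n)}-\widetilde{g_C}\|_2=\|U_C(\gamma^{(n)}-\widetilde g)\|_2=\|\gamma^{(n)}-\widetilde g\|_2$ by unitarity and $\widetilde{g_C}=U_C\widetilde g$, so $\Gamma_C^{(n)}\to\widetilde{g_C}$ at the same geometric rate. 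The same conjugacy also gives $\|I-\rho S_{g_C,\Lambda_C}\|=\|I-\rho S_{g,\Lambda}\|$.
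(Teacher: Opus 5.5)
Your proposal is correct. Steps 1 and 2 follow the paper's argument exactly. The recursion is transported via $S_{g_C,\Lambda_C}=U_CS_{g,\Lambda}U_C^{-1}$ (Corollary~\ref{cor:frame_operator_conjugacy}), and exact duality of each $\Gamma_C^{(n)}$ follows from Corollary~\ref{cor:transport_duality} with $\gamma=\gamma^{(n)}$.

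The convergence step is where you take a different and cleaner route. The paper sets $R:=I-\rho S_{g,\Lambda}$ and gets invertibility of $S_{g,\Lambda}$ and $\widetilde g=\rho\sum_{k\ge0}R^kg$ from the Neumann series. It then proves the closed form $\gamma^{(n)}=R^n\gamma^{(0)}+\rho\sum_{k=0}^{n-1}R^kg$ by induction, and bounds $\|\gamma^{(n)}-\widetilde g\|_2\le\|R\|^n\|\gamma^{(0)}\|_2+\rho\|\sum_{k\ge n}R^kg\|_2$. You instead subtract the fixed-point equation and obtain the exact identity $\gamma^{(n)}-\widetilde g=R^n(\gamma^{(0)}-\widetilde g)$. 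Since $\rho\sum_{k\ge n}R^kg=R^n\widetilde g$, the paper's estimate is just a triangle-inequality splitting of your identity. Your version is shorter and gives an explicit geometric rate $\|R\|^n\|\gamma^{(0)}-\widetilde g\|_2$, which $\Gamma_C^{(n)}$ inherits verbatim by unitarity. The paper's version yields an explicit formula for the iterates.

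Your justification that $S_{g,\Lambda}$ is invertible, via the lower frame bound obtained from the dual pair, is also fine. The hypothesis $\|I-\rho S_{g,\Lambda}\|<1$ alone already gives it.

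Your optional induction uses $\pi(\lambda)S_{g,\Lambda}=S_{g,\Lambda}\pi(\lambda)$ for $\lambda\in\Lambda$. It is correct and improves on the statement: it shows that the hypothesis ``each $\gamma^{(n)}$ is an exact dual window'' holds automatically once $\gamma^{(0)}$ is one, whereas the paper leaves it as an assumption. The induction tacitly needs each $\mathcal G(\gamma^{(n)},\Lambda)$ to be Bessel; this follows from the same commutation, and you should state it.
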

\begin{remark}[Neumann-series interpretation of the iteration]\label{rem:neumann_series}
The assumption $\|I-\rho S_{g,\Lambda}\|<1$ is a standard sufficient condition ensuring that
$S_{g,\Lambda}$ is invertible and that its inverse admits a Neumann-series representation. Indeed,
set $R:=I-\rho S_{g,\Lambda}$. Then $\|R\|<1$ implies that the series
$$
\sum_{n=0}^\infty R^n
$$
converges in operator norm and satisfies $(I-R)\left(\sum_{n=0}^\infty R^n\right)=I$.
Since $I-R=\rho S_{g,\Lambda}$, it follows that
$$
S_{g,\Lambda}^{-1}=\rho\sum_{n=0}^\infty (I-\rho S_{g,\Lambda})^n
\qquad\text{(convergence in operator norm).}
$$
In particular, this representation identifies the expected fixed point of the iteration as the canonical
dual window $\widetilde g=S_{g,\Lambda}^{-1}g$. The convergence of the iteration to this fixed point will be
proved explicitly in the proof below.
\end{remark}
\begin{proof}
By Corollary \ref{cor:frame_operator_conjugacy},
$$
S_{g_C,\Lambda_C}=U_C S_{g,\Lambda} U_C^{-1}.
$$
Hence
\begin{align*}
\rho g_C + (I-\rho S_{g_C,\Lambda_C})\Gamma_C^{(n)}
&=
\rho U_C g + \big(I-\rho U_C S_{g,\Lambda}U_C^{-1}\big)U_C\gamma^{(n)}\\
&=
U_C\big(\rho g + (I-\rho S_{g,\Lambda})\gamma^{(n)}\big)\\
&=
U_C\gamma^{(n+1)}\\
&=
\Gamma_C^{(n+1)}.
\end{align*}
This proves the recursive identity.

If each $\gamma^{(n)}$ is an exact dual of $g$ on $\Lambda$, then Proposition
\ref{cor:transport_duality} implies that each $U_C\gamma^{(n)}=\Gamma_C^{(n)}$
is an exact dual of $U_C g=g_C$ on $\Lambda_C$.

We show now that $\gamma^{(n)}$ converge in $L^2(\mathbb{R}^d)$ to $\tilde{g}$ and $\tilde{g} \in L^2(\mathbb{R}^d)$. Rather than invoking the Banach-Caccioppoli fixed point theorem, we exploit the linear structure of the recursion and write the iterates explicitly. This gives a direct proof of convergence to the canonical dual window through the Neumann series. We prove directly the both things.

First we observe that the fixed point for the recursively is $\tilde{g}$, indeed if the limit exist
\begin{equation}
\lim\gamma^{(n+1)}=\lim\gamma^{(n)}=\bar{\gamma}
\end{equation}
is the fixed point:
\begin{equation}
    \bar{\gamma}=\rho g+(I-\rho S_{g,\Lambda})\bar{\gamma}=\rho g+\bar{\gamma}-\rho S_{g,\Lambda}\bar{\gamma}
\end{equation}
that is equivalent to:
\begin{equation}
    \rho g-\rho S_{g,\Lambda}\bar{\gamma}=0 \iff S_{g,\Lambda}\bar{\gamma}=g
\end{equation}
We observe that the error $R_{g,\Lambda}:=I-\rho S_{g,\Lambda}$ for hypotesis have $\left\|R_{g,\Lambda}\right\|<1$, so as in Remark \ref{rem:neumann_series}:
\begin{equation}
    S_{g,\Lambda}^{-1}=\rho(I-R_{g,\Lambda})^{-1}=\rho\displaystyle\sum_{n=0}^{\infty}R_{g,\Lambda}^n.
\end{equation}
Since the inverse of $S_{g,\Lambda}$ exists and $\bar{\gamma}=S^{-1}_{g,\Lambda}g$, the fixed point is exactly the dual $\tilde{g}=S^{-1}_{g,\Lambda}g=\rho\displaystyle\sum_{n=0}^{\infty}R_{g,\Lambda}^ng$ with $R^{(0)}_{g,\Lambda}=I$.

Now we show the exact closed formula for the iteration if we start from a $\gamma^{(0)} \in L^2(\mathbb{R}^d)$. We define $Z_n(g):=\sum_{k=0}^nR_{g,\Lambda}^kg$ the partial sum of the Neumann series.
The closed formula is:
\begin{equation} \gamma^{(n)}=R^{n}_{g,\Lambda}\gamma^{(0)}+\rho Z_{n-1}(g)\; \;  \forall n \ge 1
\end{equation}
that tends to $\tilde{g}$ for $n \to \infty$ since it is the fixed point, as said previously. The closed formula follows by a straightforward induction on $n$ (equivalently, by solving first the
homogeneous recursion and then adding a particular solution through the usual variation-of-constants argument).
We now prove that $\gamma^{(n)}$ converges to $\widetilde g$ in $L^2(\R^d)$:
\begin{equation}
\begin{split}
    \left\|\gamma^{(n)}-\tilde{g}\right\|_{2}=\left\|R_{g,\Lambda}^{n}\gamma^{(0)}+\rho Z_{n-1}(g)- S_{g,\Lambda}^{-1}g\right\|_{2} \le \left\|R_{g,\Lambda}^{n}\gamma^{(0)}\right\|_{2}+ \rho \left\|\displaystyle\sum_{k=n}^{\infty}R_{g,\Lambda}^kg\right\|_{2 }\\\le \left\|R_{g,\Lambda}^{n}\right\|\left\|\gamma^{(0)}\right\|_{2}+ \rho \left\|\displaystyle\sum_{k=n}^{\infty}R_{g,\Lambda}^kg\right\|_{2 } \le \left\|R_{g,\Lambda}\right\|^n\left\|\gamma^{(0)}\right\|_{2}+ \rho \left\|\displaystyle\sum_{k=n}^{\infty}R_{g,\Lambda}^kg\right\|_{2 }
    \end{split}
\end{equation} and because $\left\|R_{g,\Lambda}\right\|<1$ then for $n  \to   \infty,  \; \left\|R_{g,\Lambda}\right\|^n \to 0 $, and because the tail of a convergent series go to 0 for $n \to \infty$, then:
\begin{equation}
    \left\|\gamma^{(n)}-\tilde{g}\right\|_{2}\le  \left\|R_{g,\Lambda}\right\|^n\left\|\gamma^{(0)}\right\|_{2} +  \rho \left\|\displaystyle\sum_{k=n}^{\infty}R_{g,\Lambda}^kg\right\|_{2 } \to 0
\end{equation} for $n \to \infty$ ($\gamma^{(0)} \in L^2(\mathbb{R}^d)$). So converge in $L^2(\mathbb{R}^d)$, then $\tilde{g} \in L^2(\mathbb{R}^d)$ via the completeness of Hilbert space.

Finally, since $\gamma^{(n)}\to \widetilde g$ in $L^2(\mathbb{R}^d)$, then by continuity of the unitary operator $U_C$,
$$
\Gamma_C^{(n)}=U_C\gamma^{(n)}\to U_C\widetilde g.
$$
The closed formula for $\Gamma^{(n)}_C$ when we fix a start function $\gamma^{(0)} \in L^2(\mathbb{R}^d)$ is:
\begin{equation}
    \Gamma^{(n)}_C=U_C\gamma^{(n)}=U_CR^{n}_{g,\Lambda}\gamma^{(0)}+\rho U_CZ_{n-1}(g),\; \;  \forall n \ge 1
\end{equation}
 and tend to $U_C\tilde{g}=U_C S^{-1}_{g,\Lambda}g $. By Corollary \ref{cor:frame_operator_conjugacy}, $U_C\widetilde g=\widetilde{g_C}$.
\end{proof}

\begin{corollary}\label{cor:compact_support_iteration}
Under the assumptions of Theorem \ref{thm:iteration_transport}, if each $\gamma^{(n)}$
is compactly supported, then each $\Gamma_C^{(n)}$ is compactly supported.
If each $\gamma^{(n)}$ belongs to $\mathcal S(\R^d)$, then each $\Gamma_C^{(n)}$ belongs to $\mathcal S(\R^d)$. Here, $\mathcal S(\R^d)$ denotes the Schwartz space.
\end{corollary}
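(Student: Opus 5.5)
The argument is pointwise and uses only the definition $\Gamma_C^{(n)}=U_C\gamma^{(n)}$ from Theorem~\ref{thm:iteration_transport}, that is,
$$
\Gamma_C^{(n)}(x)=e^{\pi i x^TCx}\,\gamma^{(n)}(x).
$$
The recursion, the duality properties and the Neumann-series convergence are not needed. I will treat the two assertions separately, since each concerns a fixed $n$.

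\emph{Compact support.} Since $|e^{\pi i x^TCx}|=1$, the chirp factor never vanishes. Hence $\Gamma_C^{(n)}(x)=0$ exactly when $\gamma^{(n)}(x)=0$, and the two functions have the same support: $\operatorname{supp}\Gamma_C^{(n)}=\operatorname{supp}\gamma^{(n)}$. If $\gamma^{(n)}$ is compactly supported, so is $\Gamma_C^{(n)}$. If $\gamma^{(n)}$ is only an $L^2$ class, I read the statement as vanishing a.e.\ outside a compact set, and the same pointwise reasoning applies.

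\emph{Schwartz class.} Set $\phi(x)=e^{\pi i x^TCx}$. This function is smooth, and each of its derivatives has the form $\partial^\beta\phi=P_\beta\,\phi$ for some polynomial $P_\beta$. This follows by induction on $|\beta|$, using $\partial_{x_m}\phi=2\pi i\,(Cx)_m\,\phi$ and the symmetry of $C$. So $\phi$ and all its derivatives have at most polynomial growth.

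For multi-indices $\alpha,\beta$, Leibniz' rule gives
$$
x^\alpha\partial^\beta(\phi\gamma^{(n)})=\sum_{\delta\le\beta}\binom{\beta}{\delta}\,x^\alpha P_\delta(x)\,\phi(x)\,\partial^{\beta-\delta}\gamma^{(n)}(x).
$$
Each term is unimodular times a polynomial times a derivative of the Schwartz function $\gamma^{(n)}$, so it is bounded. Every Schwartz seminorm of $\Gamma_C^{(n)}$ is therefore finite, and $\Gamma_C^{(n)}\in\mathcal S(\R^d)$.

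\emph{Main obstacle.} The only step with content is showing that $\phi$ is a smooth multiplier with polynomially bounded derivatives, which is the structural claim $\partial^\beta\phi=P_\beta\phi$. The rest is bookkeeping. This matches the paper's earlier remark that $U_C$ preserves $\mathcal S(\R^d)$, which could also be cited directly.
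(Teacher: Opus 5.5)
Your proposal is correct and follows the same route as the paper, whose proof simply invokes the fact that multiplication by the unimodular smooth chirp $e^{\pi i x^TCx}$ preserves compact support and the Schwartz class. You supply the details the paper leaves implicit: the supports coincide because the factor never vanishes, and the Schwartz seminorms stay finite by the structure $\partial^\beta\phi=P_\beta\phi$ combined with Leibniz' rule.
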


\begin{proof}
This follows immediately from the fact that multiplication by the chirp
$e^{\pi i x^TCx}$ preserves compact support and the Schwartz class.
\end{proof}

\begin{remark}
{When the initial separable dual sequence is chosen from an iterative exact-dual approximation scheme
for the canonical dual, in the spirit of \citep{Sto22}, Theorem \ref{thm:iteration_transport} shows that
the entire approximation mechanism passes unchanged to the chirp-sheared lattice. Likewise,
Corollary \ref{cor:approx_dual_transport} shows that the approximate-duality framework of
\citep{CJKK18} is invariant under the same chirp deformation.}
\end{remark}

\section{Further structural remarks}

The results above isolate a class of non-separable higher-dimensional Gabor systems for which one can
retain three desirable features simultaneously:
\begin{enumerate}[label=\textup{(\roman*)}]
    \item explicit formulas for the generating window and a dual window;
    \item preservation of compact support and smoothness;
    \item direct transfer of exact and approximate reconstruction properties from a separable model.
\end{enumerate}

{From a geometric viewpoint, the map $S_C$ gives a nontrivial phase-space shear whenever
$C\neq0$. However, this does not necessarily imply that the image lattice is non-product as a set:
if $CA=BN$ for some integer matrix $N$, then
$$
\Lambda_{A,B,C}=A\mathbb Z^d\times B\mathbb Z^d.
$$
Thus the lattice geometry and the functional non-separability of the window are distinct issues.
The associated chirped tensor-product window becomes genuinely non-separable when $C$ has a
nonzero off-diagonal entry, under the non-vanishing assumptions of
Proposition~\ref{prop:nonseparable}. At the same time, the proofs remain transparent because the
deformation is implemented by a concrete unitary operator.}

It is also worth emphasizing that the present strategy is compatible with several possible sources for the
initial separable dual pair:
\begin{itemize}
    \item one-dimensional compactly supported exact dual windows;
    \item tensor products of such pairs;
    \item higher-dimensional separable windows obtained by geometric constructions on product lattices.
\end{itemize}
Thus the method is not tied to a single explicit family of windows.

Although the present paper is formulated entirely on $L^2(\R^d)$, the constructive control of dual windows obtained here is also compatible with broader stability questions in function spaces adapted to time-frequency analysis. In this direction, the Banach-space and operator-algebra approach of \citep{BCKOR13} suggests that explicit dual constructions may have consequences beyond the Hilbert-space level.

\section{Quantitative effects of chirp-induced non-separability:
time--frequency coupling, stability, and robustness}\label{sec:quantitative}

This section complements the constructive results above by providing three quantitative
viewpoints on chirp-generated non-separable windows:
{(i) a phase-space covariance formalism that makes chirp-induced time--frequency coupling
quantitatively visible;}
(ii) a precise comparison between atom-level localization and frame-level stability within
our transport class;
(iii) deterministic and statistical reconstruction error bounds under coefficient noise,
in both exact-dual and approximately-dual regimes.

Throughout, $d\ge 1$ is fixed. We work with the time--frequency shifts
$\pi(x,\omega)=M_\omega T_x$ and the chirp operator
$$
(U_C f)(t)=e^{\pi i\, t^T C t}f(t),
\qquad C=C^T\in\R^{d\times d},
$$
as in Section~\ref{sec:chirp}.
We denote by $S_C:\R^{2d}\to\R^{2d}$ the associated shear map
$$
S_C(x,\omega)=(x,\omega+Cx),
\qquad
S_C=\begin{pmatrix}I&0\\ C&I\end{pmatrix},
\qquad
S_C^{-1}=\begin{pmatrix}I&0\\ -C&I\end{pmatrix}.
$$

\subsection{A phase-space covariance matrix and a separability defect}\label{subsec:covariance}

\subsubsection{Short-time Fourier transform and the Moyal identity}
Let $f,\phi\in L^2(\R^d)$ with $\phi\neq 0$. We define the short-time Fourier transform (STFT)
$$
V_\phi f(x,\omega):=\inner{f,\pi(x,\omega)\phi}
=\int_{\R^d} f(t)\,\overline{\phi(t-x)}\,e^{-2\pi i \omega\cdot t}\,dt,
\qquad (x,\omega)\in\R^{2d}.
$$
We now prove the Moyal identity in the present normalization:
\begin{equation}\label{eq:moyal}
\int_{\R^{2d}} |V_\phi f(x,\omega)|^2\,dx\,d\omega
=
\|f\|_2^2\,\|\phi\|_2^2.
\end{equation}
\begin{proof}
{We first prove the identity for
$f,\varphi\in C_c^\infty(\mathbb R^d)$.}

{
For fixed $x\in\mathbb R^d$, the function
$$
t\mapsto f(t)\overline{\varphi(t-x)}
$$
belongs to $C_c^\infty(\mathbb R^d)$,
hence to $L^2(\mathbb R^d)$.
Therefore, by Plancherel's theorem,
$$
\int_{\mathbb R^d}
|V_\varphi f(x,\omega)|^2\,d\omega
=
\int_{\mathbb R^d}
|f(t)|^2|\varphi(t-x)|^2\,dt.
$$}

Integrating in $x$ and applying Fubini,
\begin{align*}
\int_{\R^{2d}} |V_\phi f(x,\omega)|^2\,dx\,d\omega
&=
\int_{\R^d}\int_{\R^d} |f(t)|^2\,|\phi(t-x)|^2\,dt\,dx\\
&=
\int_{\R^d} |f(t)|^2\left(\int_{\R^d}|\phi(t-x)|^2\,dx\right)dt\\
&=
\int_{\R^d}|f(t)|^2\,dt\;\int_{\R^d}|\phi(u)|^2\,du
=
\|f\|_2^2\|\phi\|_2^2,
\end{align*}
{This proves \eqref{eq:moyal} for
$f,\varphi\in C_c^\infty(\mathbb R^d)$.}

{
Since $C_c^\infty(\mathbb R^d)$ is dense in
$L^2(\mathbb R^d)$, the general case follows by
approximating $f$ and $\varphi$ in $L^2$ and passing
to the limit using the continuity of the STFT
as a bilinear map on $L^2(\mathbb R^d)\times
L^2(\mathbb R^d)$.}
\end{proof}

\subsubsection{A normalized phase-space energy distribution}
Assume $\|f\|_2=\|\phi\|_2=1$ (this is without loss of generality in this subsection, since the normalization
will be explicit). Define the nonnegative function
$$
p_{f,\phi}(x,\omega):=|V_\phi f(x,\omega)|^2.
$$
By \eqref{eq:moyal}, $p_{f,\phi}\in L^1(\R^{2d})$ and $\int_{\R^{2d}} p_{f,\phi}=1$; hence
$p_{f,\phi}$ is a probability density on phase space.

We define the associated \emph{phase-space mean} and \emph{phase-space covariance matrix}
whenever the second moments are finite:
$$
m_{f,\phi}:=\int_{\R^{2d}} z\,p_{f,\phi}(z)\,dz \in \R^{2d},
\qquad
\Sigma_{f,\phi}:=\int_{\R^{2d}} (z-m_{f,\phi})(z-m_{f,\phi})^T\,p_{f,\phi}(z)\,dz \in \R^{2d\times 2d}.
$$
(Here we write $z=(x,\omega)\in\R^{2d}$ and $dz=dx\,d\omega$.)

\paragraph{Finite second moments in our compactly supported smooth setting.}
In the present paper, $f$ and $\phi$ are typically in $C_c^\infty(\R^d)$.
In that case, $x\mapsto V_\phi f(x,\omega)$ has compact support uniformly in $\omega$:
indeed, $V_\phi f(x,\omega)$ can be nonzero only if $\operatorname{supp} f\cap(x+\operatorname{supp}\phi)\neq\varnothing$,
i.e.\ $x\in \operatorname{supp} f-\operatorname{supp}\phi$, a compact set. Hence all $x$-moments are finite.
Moreover, for each fixed $x$, the function $t\mapsto f(t)\overline{\phi(t-x)}$ is in $C_c^\infty(\R^d)$,
so its Fourier transform in $\omega$ decays faster than any polynomial; therefore all $\omega$-moments are finite.
Consequently $\Sigma_{f,\phi}$ is well-defined for our windows.

\subsubsection{Exact shear covariance of the STFT energy under chirp}
We now show that the chirp deformation transports the phase-space density by the shear $S_C$.

\begin{proposition}[STFT covariance under chirp]\label{prop:stft_chirp_cov}
Let $C=C^T$. For all $f,\phi\in L^2(\R^d)$ and all $(x,\omega)\in\R^{2d}$,
\begin{equation}\label{eq:stft_covariance}
V_{U_C\phi}(U_C f)(x,\omega)
=
e^{-\pi i x^T C x}\,V_\phi f\!\big(S^{-1}_C(x,\omega)\big)
=
e^{-\pi i x^T C x}\,V_\phi f(x,\omega-Cx).
\end{equation}
In particular,
\begin{equation}\label{eq:energy_covariance}
|V_{U_C\phi}(U_C f)(x,\omega)|^2
=
|V_\phi f(x,\omega-Cx)|^2
=
p_{f,\phi}(x,\omega-Cx).
\end{equation}
\end{proposition}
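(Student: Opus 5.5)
The identity \eqref{eq:stft_covariance} should follow directly from Lemma~\ref{lem:chirp_covariance} and the unitarity of $U_C$; no integral computation is needed. Fix $(x,\omega)\in\R^{2d}$. The first step is to write the given phase-space point as the shear of another one:
$$
(x,\omega)=S_C(x,\omega-Cx),
\qquad\text{i.e.}\qquad
S_C^{-1}(x,\omega)=(x,\omega-Cx).
$$
Applying Lemma~\ref{lem:chirp_covariance} at the point $(x,\omega-Cx)$ then gives the operator identity
$$
\pi(x,\omega)\,U_C=e^{\pi i x^TCx}\,U_C\,\pi(x,\omega-Cx).
$$

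Next I insert this into the definition of the STFT:
$$
V_{U_C\phi}(U_Cf)(x,\omega)=\inner{U_Cf,\pi(x,\omega)U_C\phi}
=\inner{U_Cf,\,e^{\pi i x^TCx}U_C\pi(x,\omega-Cx)\phi}.
$$
Since the inner product is conjugate-linear in the second slot, the phase leaves as $e^{-\pi i x^TCx}$. Since $U_C$ is unitary, $\inner{U_Cf,U_Ch}=\inner{f,h}$. Together these yield $e^{-\pi i x^TCx}\inner{f,\pi(x,\omega-Cx)\phi}$, which is $e^{-\pi i x^TCx}V_\phi f(S_C^{-1}(x,\omega))$. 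This is exactly \eqref{eq:stft_covariance}, valid for all $f,\phi\in L^2(\R^d)$ because every step is an identity of bounded operators.

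For \eqref{eq:energy_covariance}, I take absolute values squared; the unimodular factor $e^{-\pi i x^TCx}$ disappears. The last equality $|V_\phi f(x,\omega-Cx)|^2=p_{f,\phi}(x,\omega-Cx)$ is just the definition of $p_{f,\phi}$. When $\|f\|_2=\|\phi\|_2=1$, unitarity also gives $\|U_Cf\|_2=\|U_C\phi\|_2=1$, so the left-hand side is itself the density $p_{U_Cf,U_C\phi}$, consistent with the normalization of the previous subsection.

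The only point needing care, rather than a genuine obstacle, is the sign bookkeeping: the lemma must be applied at $S_C^{-1}(x,\omega)$ and not at $(x,\omega)$, and the phase must be conjugated when it is moved out of the second argument of the inner product. Both steps are routine.
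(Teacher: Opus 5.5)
Your proposal is correct and follows essentially the same route as the paper: the paper also writes $V_{U_C\phi}(U_Cf)(x,\omega)=\inner{f,U_C^{-1}\pi(x,\omega)U_C\phi}$, rewrites Lemma~\ref{lem:chirp_covariance} as $U_C^{-1}\pi(x,\omega)U_C=e^{\pi i x^TCx}\pi(S_C^{-1}(x,\omega))$, and pulls out the conjugated phase. The only difference is cosmetic: you apply the lemma directly at the preimage point $S_C^{-1}(x,\omega)=(x,\omega-Cx)$, whereas the paper first conjugates and then substitutes.
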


\begin{proof}
By definition of the STFT and unitarity of $U_C$,
$$
V_{U_C\phi}(U_C f)(x,\omega)
=
\inner{U_C f,\pi(x,\omega)U_C\phi}
=
\inner{f, U_C^{-1}\pi(x,\omega)U_C\,\phi}.
$$
From Lemma~\ref{lem:chirp_covariance} we have, for all $(x,\omega)$,
$$
\pi(S_C(x,\omega))\,U_C
=
e^{\pi i x^T C x}\,U_C\,\pi(x,\omega).
$$

Multiplying on the left by $U_C^{-1}$ and on the right by $U_C^{-1}$ yields
$$
U_C^{-1}\pi(S_C(x,\omega))U_C
=
e^{\pi i x^T C x}\,\pi(x,\omega).
$$
Replacing $(x,\omega)$ with $S_C^{-1}(x,\omega)$, we obtain
$$
U_C^{-1}\pi(x,\omega)U_C
=
e^{\pi i x^T C x}\,\pi(S_C^{-1}(x,\omega)).
$$
Since $S_C^{-1}(x,\omega)=(x,\omega-Cx)$, this reads
$$
U_C^{-1}\pi(x,\omega)U_C
=
e^{\pi i x^T C x}\,\pi(x,\omega-Cx).
$$
Substituting into the STFT expression gives
$$
V_{U_C\phi}(U_C f)(x,\omega)
=
e^{-\pi i x^T C x}\,V_\phi f(S^{-1}_C(x,\omega)),
$$
which is \eqref{eq:stft_covariance}. Taking moduli yields \eqref{eq:energy_covariance}.
\end{proof}

\subsubsection{How the phase-space covariance transforms, and a defect of separability}
Assume $\|f\|_2=\|\phi\|_2=1$ so that $p_{f,\phi}$ is a probability density.
By \eqref{eq:energy_covariance}, for the chirped pair $(U_C f,U_C\phi)$ we have
\begin{equation}\label{eq:density_pullback}
p_{U_C f,U_C\phi}(z)=p_{f,\phi}(S_C^{-1} z),\qquad z\in\R^{2d}.
\end{equation}
Since $\det S_C=1$, the change of variables $z'=S_C z$ has Jacobian $1$.

\begin{proposition}[Transformation rule for mean and covariance]\label{prop:mean_cov_transform}
Assume $m_{f,\phi}$ and $\Sigma_{f,\phi}$ are well-defined. Then $m_{U_C f,U_C\phi}$ and
$\Sigma_{U_C f,U_C\phi}$ are well-defined and satisfy
\begin{equation}\label{eq:mean_transform}
m_{U_C f,U_C\phi}=S_C m_{f,\phi},
\end{equation}

and
\begin{equation}\label{eq:cov_transform}
\Sigma_{U_C f,U_C\phi}
=
S_C\,\Sigma_{f,\phi}\,S_C^T.
\end{equation}
\end{proposition}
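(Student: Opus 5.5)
The plan is to reduce everything to a linear change of variables in phase space, using the density pullback \eqref{eq:density_pullback}, i.e.\ $p_{U_C f,U_C\phi}(z)=p_{f,\phi}(S_C^{-1}z)$, which follows from Proposition~\ref{prop:stft_chirp_cov}. Since $U_C$ is unitary, $\|U_Cf\|_2=\|U_C\phi\|_2=1$, so $p_{U_Cf,U_C\phi}$ is again a probability density. For any measurable $\Phi:\R^{2d}\to[0,\infty]$ (or $\Phi$ integrable against $p_{f,\phi}$), the substitution $z=S_C w$, whose Jacobian is $\det S_C=1$, gives
$$
\int_{\R^{2d}}\Phi(z)\,p_{U_Cf,U_C\phi}(z)\,dz=\int_{\R^{2d}}\Phi(S_Cw)\,p_{f,\phi}(w)\,dw .
$$
This identity is the only tool needed.

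\textbf{Well-definedness.} Take $\Phi(z)=|z|$ and $\Phi(z)=|z|^2$. Then $|S_Cw|\le\|S_C\|\,|w|$, so the first and second moments of $p_{U_Cf,U_C\phi}$ are bounded by $\|S_C\|$ and $\|S_C\|^2$ times those of $p_{f,\phi}$. These are finite by hypothesis, since $\Sigma_{f,\phi}$ exists exactly when the second moments are finite. Hence $m_{U_Cf,U_C\phi}$ and $\Sigma_{U_Cf,U_C\phi}$ exist.

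\textbf{Mean.} Apply the identity componentwise with $\Phi(z)=z$. Linearity of $S_C$ lets it come out of the integral:
$$
m_{U_Cf,U_C\phi}=\int S_Cw\,p_{f,\phi}(w)\,dw=S_C m_{f,\phi},
$$
which is \eqref{eq:mean_transform}.

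\textbf{Covariance.} Apply the identity with $\Phi(z)=(z-S_Cm_{f,\phi})(z-S_Cm_{f,\phi})^T$, entrywise. Then
$$
\Phi(S_Cw)=S_C(w-m_{f,\phi})(w-m_{f,\phi})^TS_C^T .
$$
Pulling the constant matrices $S_C$ and $S_C^T$ out of the integral gives $S_C\Sigma_{f,\phi}S_C^T$, which is \eqref{eq:cov_transform}.

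\textbf{Main obstacle.} The only delicate point is justifying the change of variables for signed, vector- and matrix-valued integrands. I would handle it by first proving the identity for nonnegative $\Phi$ via Tonelli, then extending to the entries of $z$ and $zz^T$ by splitting into positive and negative parts. Integrability of those parts is guaranteed by the moment bounds in the well-definedness step.
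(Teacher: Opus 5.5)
Your proposal is correct and follows essentially the same route as the paper: both use the density pullback $p_{U_Cf,U_C\phi}(z)=p_{f,\phi}(S_C^{-1}z)$ and the unit-Jacobian change of variables $z=S_Cw$, then pull $S_C$ and $S_C^T$ out of the integrals. Your explicit moment bound via $|S_Cw|\le\|S_C\|\,|w|$ and the Tonelli/positive--negative-part justification are small additions. They cover the well-definedness claim, which the paper asserts but does not argue.
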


 \begin{proof}
  From $p_{U_C f,U_C\phi}(z)=p_{f,\phi}(S_C^{-1} z)$, since $\det S_C=1$, also $\det S_C^{-1}=1$, therefore
$$
m_{U_C f,U_C\phi}
=
\int_{\R^{2d}} z\,p_{f,\phi}(S_C^{-1}z)\,dz.
$$
Set $z'=S_C^{-1}z$, so that $z=S_C z'$ and $dz=dz'$. Then
$$
m_{U_C f,U_C\phi}
=
\int_{\R^{2d}} S_C z'\,p_{f,\phi}(z')\,dz'
=
S_C\int_{\R^{2d}} z'\,p_{f,\phi}(z')\,dz'
=
S_C m_{f,\phi},
$$
which proves \eqref{eq:mean_transform}.
For the covariance, write $m:=m_{f,\phi}$ and $m_C:=m_{U_C f,U_C\phi}=S_C m$.
Then
$$
\Sigma_{U_C f,U_C\phi}
=
\int (z-m_C)(z-m_C)^T\,p_{f,\phi}(S_C^{-1}z)\,dz.
$$
Apply the same change of variables $z'=S_C^{-1}z$, so $z=S_C z'$:
$$
z-m_C=S_C z'-S_C m=S_C(z'-m),
$$
hence
$$
(z-m_C)(z-m_C)^T
=
S_C (z'-m)(z'-m)^T S_C^T.
$$
Therefore,
$$
\Sigma_{U_C f,U_C\phi}
=
\int S_C (z'-m)(z'-m)^T S_C^T\,p_{f,\phi}(z')\,dz'
$$
$$
=
S_C\left(\int (z'-m)(z'-m)^T p_{f,\phi}(z')\,dz'\right)S_C^T
=
S_C\,\Sigma_{f,\phi}\,S_C^T,
$$
which proves \eqref{eq:cov_transform}.
\end{proof}

To highlight how chirp produces \emph{coupling}, we block-decompose the covariance matrix as
$$
\Sigma_{f,\phi}=
\begin{pmatrix}
\Sigma_{xx} & \Sigma_{x\omega}\\
\Sigma_{\omega x} & \Sigma_{\omega\omega}
\end{pmatrix},
\qquad
\Sigma_{xx},\Sigma_{x\omega},\Sigma_{\omega x},\Sigma_{\omega\omega}\in\R^{d\times d}.
$$
Using $S_C=\big(\!\begin{smallmatrix}I&0\\C&I\end{smallmatrix}\!\big)$,  a direct block-matrix multiplication yields:
\begin{equation}\label{eq:block_transform}
\Sigma_{U_C f,U_C\phi}=
\begin{pmatrix}
\Sigma_{xx} &
\Sigma_{x\omega}+\Sigma_{xx}C^T\\[1mm]
\Sigma_{\omega x}+C\Sigma_{xx} &
\Sigma_{\omega\omega}
+C\Sigma_{x\omega}
+\Sigma_{\omega x}C^T
+C\Sigma_{xx}C^T
\end{pmatrix}.
\end{equation}

{\begin{remark}[Scope of the covariance descriptor]
The quantity introduced below is a second-order phase-space descriptor associated with the
chosen pair $(f,\phi)$, where $\phi$ is the STFT reference window used to define the
spectrogram density. It is not an invariant of the Gabor system
$\mathcal G(f,\Lambda)$ alone, nor of the lattice alone. It also depends on the chosen
splitting of phase space into time and frequency coordinates. Accordingly, it should be
interpreted only as a covariance-level measure of time--frequency tilt, not as a direct measure
of sparsity, approximation quality, denoising performance, conditioning, or computational
complexity.
\end{remark}}

\begin{definition}[A time--frequency coupling defect]\label{def:coupling_defect}
Assume $\Sigma_{f,\phi}$ exists. We define the \emph{time--frequency coupling defect} of $(f,\phi)$ as
$$
\mathsf D(f,\phi):=\|\Sigma_{x\omega}\|_{\mathrm{F}},
$$
where $\|\cdot\|_{\mathrm{F}}$ is the Frobenius norm.
When $\mathsf D(f,\phi)=0$, the phase-space covariance has no $x$--$\omega$ cross-terms
(with respect to the chosen STFT window $\phi$), which is a natural covariance-level notion of
\emph{no time--frequency tilt}.
\end{definition}

\begin{corollary}[Chirp creates coupling from a decoupled baseline]\label{cor:coupling_created}
Assume $\Sigma_{f,\phi}$ exists and $\Sigma_{x\omega}=0$.
Then for every symmetric $C$,
$$
\mathsf D(U_C f,U_C\phi)=\|\Sigma_{xx}C^T\|_{\mathrm{F}}.
$$
In particular, if $\Sigma_{xx}$ is positive definite and $C\neq 0$, then $\mathsf D(U_C f,U_C\phi)>0$,
so the chirped pair exhibits genuine time--frequency coupling at the covariance level.
\end{corollary}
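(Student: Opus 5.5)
The plan is to read the result directly off the block formula \eqref{eq:block_transform}, which rests on Proposition~\ref{prop:mean_cov_transform}. First, since $\Sigma_{f,\phi}$ exists by assumption, that proposition guarantees that $\Sigma_{U_Cf,U_C\phi}$ also exists, so $\mathsf D(U_Cf,U_C\phi)$ is well-defined. By \eqref{eq:block_transform}, the upper-right $x$--$\omega$ block of the transformed covariance is $\Sigma_{x\omega}+\Sigma_{xx}C^T$. Setting $\Sigma_{x\omega}=0$ leaves $\Sigma_{xx}C^T$. Definition~\ref{def:coupling_defect} then gives $\mathsf D(U_Cf,U_C\phi)=\|\Sigma_{xx}C^T\|_{\mathrm F}$, which is the first claim.

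For the positivity statement, I would argue that $\Sigma_{xx}C^T\neq0$ whenever $\Sigma_{xx}$ is positive definite and $C\neq0$. Positive definiteness makes $\Sigma_{xx}$ invertible. So if $\Sigma_{xx}C^T=0$, multiplying on the left by $\Sigma_{xx}^{-1}$ gives $C^T=0$, and hence $C=0$, a contradiction.

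A Frobenius norm vanishes only for the zero matrix, so $\mathsf D(U_Cf,U_C\phi)>0$. Optionally, one can make this quantitative with $\|\Sigma_{xx}C^T\|_{\mathrm F}\ge\lambda_{\min}(\Sigma_{xx})\|C\|_{\mathrm F}$. This follows by applying $\|\Sigma_{xx}v\|\ge\lambda_{\min}\|v\|$ to each column $v$ of $C^T$.

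There is essentially no obstacle. The only point to check is that the correct off-diagonal block is read from \eqref{eq:block_transform}, namely the $x\omega$ block rather than the $\omega x$ block. Symmetry of $\Sigma$ makes the choice immaterial: the $\omega x$ block is $C\Sigma_{xx}=(\Sigma_{xx}C^T)^T$, which has the same Frobenius norm.
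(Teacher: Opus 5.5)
Your proof is correct and follows the paper's argument: you read the cross block $\Sigma_{x\omega}+\Sigma_{xx}C^T$ off \eqref{eq:block_transform}, set $\Sigma_{x\omega}=0$, and use invertibility of the positive definite $\Sigma_{xx}$ to rule out $\Sigma_{xx}C^T=0$ when $C\neq 0$. Your extra remarks are valid refinements that the statement does not need: well-definedness via Proposition~\ref{prop:mean_cov_transform}, the bound $\|\Sigma_{xx}C^T\|_{\mathrm F}\ge\lambda_{\min}(\Sigma_{xx})\|C\|_{\mathrm F}$, and the observation that the $\omega x$ block gives the same Frobenius norm.
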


\begin{proof}
If $\Sigma_{x\omega}=0$, then by \eqref{eq:block_transform} the new cross-block is
$\Sigma_{x\omega}^{(C)}=\Sigma_{xx}C^T$. Taking Frobenius norms gives the claim.
If $\Sigma_{xx}$ is positive definite, then $\Sigma_{xx}C^T=0$ implies $C=0$, hence for $C\neq 0$
we have $\|\Sigma_{xx}C^T\|_{\mathrm{F}}>0$.
\end{proof}

\begin{remark}[Interpretation for separable windows]\label{rem:separable_interpretation}
{If $f$ and $\phi$ are separable (tensor products) and are centered/symmetric so that the cross-moments vanish,
then $\Sigma_{x\omega}=0$ is natural: the corresponding phase-space distribution has axes aligned with the coordinate
directions. Corollary~\ref{cor:coupling_created} shows that the chirp factor $e^{\pi i x^TCx}$ produces a \emph{tilt}
controlled explicitly by $C$, making time--frequency coupling measurable by $\mathsf D$.
This covariance-level coupling should not be identified with functional non-separability: diagonal chirps may
produce time--frequency tilt while preserving separability of tensor-product windows. Genuine functional
non-separability is guaranteed by the off-diagonal condition in Proposition~\ref{prop:nonseparable}.}
\end{remark}

\subsection{Localization versus frame stability: invariants and non-implications}\label{subsec:local_vs_stability}

\subsubsection{Frame bounds are invariant under chirp transport}
We rephrase the invariance established earlier in a form that will be used below.
Let $\Lambda\subset\R^{2d}$ be a lattice. Define the analysis and synthesis operators
$$
C_{g,\Lambda}:L^2(\R^d)\to \ell^2(\Lambda),
\qquad
C_{g,\Lambda}f:=\big(\inner{f,\pi(\lambda)g}\big)_{\lambda\in\Lambda},
$$
$$
D_{g,\Lambda}:\ell^2(\Lambda)\to L^2(\R^d),
\qquad
D_{g,\Lambda}c:=\sum_{\lambda\in\Lambda} c_\lambda\,\pi(\lambda)g,
$$
(where the synthesis sum converges unconditionally in $L^2$ when $c\in\ell^2$ and $g$ is Bessel).
Then the frame operator is $S_{g,\Lambda}=D_{g,\Lambda}C_{g,\Lambda}$ and the mixed reconstruction operator is
$\Theta_{g,\gamma,\Lambda}=D_{\gamma,\Lambda}C_{g,\Lambda}$.

\begin{proposition}[Conjugacy of analysis/synthesis and stability constants]\label{prop:analysis_synthesis_conjugacy}
Let $C=C^T$ and $\Lambda_C:=S_C\Lambda$. Then
$$
\|C_{U_C g,\Lambda_C}\|=\|C_{g,\Lambda}\|,
\qquad
\|D_{U_C g,\Lambda_C}\|=\|D_{g,\Lambda}\|.
$$
In particular, the optimal Bessel bound of $\mathcal G(g,\Lambda)$ equals that of $\mathcal G(U_C g,\Lambda_C)$.
Moreover, if $\mathcal G(g,\Lambda)$ is a frame with bounds $A,B$, then $\mathcal G(U_C g,\Lambda_C)$ is a frame with the same bounds.
\end{proposition}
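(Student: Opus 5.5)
The plan is to write the analysis operator of the chirped system as a unitary operator composed with the old analysis operator, composed with another unitary. The synthesis statement then follows by taking adjoints, and the frame bounds follow by reading off norms.

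\emph{Analysis operator.} Let $\lambda=(x,\omega)\in\Lambda$. Lemma~\ref{lem:chirp_covariance} gives the coefficient identity already used in Corollary~\ref{cor:transport_duality}:
$$
\inner{f,\pi(S_C\lambda)U_Cg}=e^{-\pi i x^TCx}\inner{U_C^{-1}f,\pi(\lambda)g}.
$$
Since $S_C$ is a bijection $\Lambda\to\Lambda_C$, it induces a unitary relabeling operator $J:\ell^2(\Lambda)\to\ell^2(\Lambda_C)$, given by $(Jc)_{S_C\lambda}=c_\lambda$. Let $\Phi$ be the diagonal multiplication operator on $\ell^2(\Lambda)$ with unimodular entries $e^{-\pi i x^TCx}$; it is also unitary. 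The coefficient identity then reads
$$
C_{U_Cg,\Lambda_C}=J\,\Phi\,C_{g,\Lambda}\,U_C^{-1},
$$
first as an identity of coefficient sequences for each $f$. In particular, $C_{U_Cg,\Lambda_C}f\in\ell^2$ exactly when $C_{g,\Lambda}U_C^{-1}f\in\ell^2$. Because $J$, $\Phi$ and $U_C^{-1}$ are unitary (and surjective), we get $\|C_{U_Cg,\Lambda_C}f\|=\|C_{g,\Lambda}U_C^{-1}f\|$ for all $f$. Taking the supremum over $\|f\|_2=1$ yields equality of the operator norms, including the case where both are infinite.

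\emph{Synthesis operator.} When the systems are Bessel, $D_{g,\Lambda}=C_{g,\Lambda}^*$. Taking adjoints gives
$$
D_{U_Cg,\Lambda_C}=U_C\,D_{g,\Lambda}\,\Phi^{*}J^{*},
$$
and since adjoints have the same norm, $\|D_{U_Cg,\Lambda_C}\|=\|D_{g,\Lambda}\|$. As a consistency check, this factorization can also be verified directly on finitely supported sequences using $\pi(S_C\lambda)U_Cg=e^{\pi i x^TCx}U_C\pi(\lambda)g$, and then extended by continuity. The optimal Bessel bound equals $\|C_{g,\Lambda}\|^2$, so it is preserved.

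\emph{Frame bounds.} The lower frame inequality follows from the same isometric relation. For every $f$,
$$
\sum_{\mu\in\Lambda_C}|\inner{f,\pi(\mu)U_Cg}|^2=\sum_{\lambda\in\Lambda}|\inner{U_C^{-1}f,\pi(\lambda)g}|^2,
$$
and this quantity lies between $A\|U_C^{-1}f\|^2=A\|f\|^2$ and $B\|f\|^2$.

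The main technical point is minor: we must make sure the adjoint identity is legitimate. This means stating the Bessel assumption under which $D$ is defined, and noting that $J$ is a genuine bijection of index sets. The latter holds because $S_C$ is invertible, so distinct lattice points stay distinct.
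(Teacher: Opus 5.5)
Your proposal is correct and follows essentially the same route as the paper: the coefficient identity from Lemma~\ref{lem:chirp_covariance}, the relabeling $\ell^2(\Lambda)\cong\ell^2(\Lambda_C)$ induced by $S_C$, a diagonal unimodular unitary on $\ell^2(\Lambda)$, and the unitarity of $U_C$. The only cosmetic difference is that you obtain $D_{U_Cg,\Lambda_C}=U_C\,D_{g,\Lambda}\,\Phi^{*}J^{*}$ by taking adjoints of your analysis factorization, whereas the paper computes $D_{U_Cg,\Lambda_C}c_C$ directly from the covariance lemma; both arguments give the same formula.
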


\begin{proof}
Fix $f\in L^2(\R^d)$ and $\lambda=(x,\omega)\in\Lambda$. By Lemma~\ref{lem:chirp_covariance},
$$
\pi(S_C\lambda)\,U_C
=
e^{\pi i x^TCx}\,U_C\,\pi(\lambda).
$$
Taking adjoints and using unitarity gives
$$
U_C^{-1}\pi(S_C\lambda)^*
=
e^{-\pi i x^TCx}\,\pi(\lambda)^*U_C^{-1}.
$$
Hence,
\begin{align*}
\inner{f,\pi(S_C\lambda)U_C g}
&=\inner{\pi(S_C\lambda)^*f, U_C g}
=\inner{U_C^{-1}\pi(S_C\lambda)^*f, g}\\
&=
e^{-\pi i x^TCx}\,\inner{\pi(\lambda)^*U_C^{-1}f, g}
=
e^{-\pi i x^TCx}\,\inner{U_C^{-1}f,\pi(\lambda)g}.
\end{align*}
Taking moduli,
$$
|\inner{f,\pi(S_C\lambda)U_C g}|
=
|\inner{U_C^{-1}f,\pi(\lambda)g}|.
$$
Therefore,
$$
\|C_{U_C g,\Lambda_C} f\|_{\ell^2(\Lambda_C)}^2
=
\sum_{\mu\in\Lambda_C}|\inner{f,\pi(\mu)U_C g}|^2
=
\sum_{\lambda\in\Lambda}|\inner{U_C^{-1}f,\pi(\lambda)g}|^2
=
\|C_{g,\Lambda}(U_C^{-1}f)\|_{\ell^2(\Lambda)}^2.
$$
Taking suprema over $\|f\|_2=1$ and using $\|U_C^{-1}f\|_2=\|f\|_2$, we obtain
$$
\|C_{U_C g,\Lambda_C}\|
=
\|C_{g,\Lambda}\|.
$$
For the synthesis operator, let $c\in\ell^2(\Lambda)$ and define $(c_C)_{\mu}:=c_\lambda$ when $\mu=S_C\lambda$.
Then $c\mapsto c_C$ is an isometric identification $\ell^2(\Lambda)\cong \ell^2(\Lambda_C)$ and
$$
D_{U_C g,\Lambda_C} c_C
=
\sum_{\mu\in\Lambda_C} (c_C)_\mu\,\pi(\mu)U_C g
=
\sum_{\lambda\in\Lambda} c_\lambda\,\pi(S_C\lambda)U_C g.
$$
Using Lemma~\ref{lem:chirp_covariance} again,
$$
\pi(S_C\lambda)U_C g
=
e^{\pi i x^TCx}\,U_C\pi(\lambda)g,
$$
hence
$$
D_{U_C g,\Lambda_C} c_C
=
U_C\left(\sum_{\lambda\in\Lambda} c_\lambda\,e^{\pi i x^TCx}\,\pi(\lambda)g\right).
$$
The unimodular factors $e^{\pi i x^TCx}$ define a diagonal unitary on $\ell^2(\Lambda)$, so
$\|D_{U_C g,\Lambda_C}\|=\|D_{g,\Lambda}\|$ follows by taking operator norms and using unitarity of $U_C$.
Finally, frame bounds are equivalent to bounds for $\|C_{g,\Lambda}f\|^2$ in terms of $\|f\|^2$, so they are preserved.
\end{proof}

\subsubsection{A concrete non-implication: coupling can vary arbitrarily at fixed frame bounds}
The previous proposition shows that, within our chirp-transport class, \emph{frame stability constants do not see the coupling parameter $C$}.
In contrast, the covariance-level defect $\mathsf D$ \emph{does} see $C$ explicitly.
This yields a precise statement: within families with identical frame bounds, one may generate arbitrarily strong
time--frequency coupling.

\begin{proposition}[Arbitrarily large coupling at fixed stability]\label{prop:coupling_vs_bounds}
Let $\Lambda$ be a lattice and let $g,\phi\in C_c^\infty(\R^d)$ with $\|g\|_2=\|\phi\|_2=1$.
Assume $\mathcal G(g,\Lambda)$ is a frame with bounds $A,B$ and assume $\Sigma_{g,\phi}$ exists with $\Sigma_{x\omega}=0$ and $\Sigma_{xx}$ positive definite.
For $t>0$ set $C_t:=tC_0$ for some fixed symmetric matrix $C_0\neq 0$, and define
$$
g_t:=U_{C_t}g,
\qquad
\phi_t:=U_{C_t}\phi,
\qquad
\Lambda_t:=S_{C_t}\Lambda.
$$
Then:
\begin{enumerate}[label=\textup{(\roman*)}]
\item $\mathcal G(g_t,\Lambda_t)$ is a frame with the same bounds $A,B$ for every $t>0$.
\item The coupling defect satisfies $\mathsf D(g_t,\phi_t)=t\,\|\Sigma_{xx}C_0^T\|_{\mathrm{F}}$, hence $\mathsf D(g_t,\phi_t)\to\infty$ as $t\to\infty$.
\end{enumerate}
\end{proposition}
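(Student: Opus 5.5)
The proof is a direct combination of two earlier results, one for each item.

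For item (i), we apply Proposition~\ref{prop:analysis_synthesis_conjugacy} (equivalently Corollary~\ref{cor:transport_duality}) with the symmetric matrix $C_t=tC_0$. Since $C_t^T=C_t$, the operator $U_{C_t}$ is a unitary chirp and $\Lambda_t=S_{C_t}\Lambda$ is again a lattice. From the proof of that proposition we have the coefficient identity
\[
|\inner{f,\pi(S_{C_t}\lambda)U_{C_t}g}|=|\inner{U_{C_t}^{-1}f,\pi(\lambda)g}|.
\]
Together with $\|U_{C_t}^{-1}f\|_2=\|f\|_2$, this shows that the frame inequalities with constants $A,B$ for $\mathcal G(g,\Lambda)$ transfer verbatim to $\mathcal G(g_t,\Lambda_t)$. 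No dependence on $t$ enters.

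For item (ii), we first check that $\Sigma_{g_t,\phi_t}$ is well-defined. Proposition~\ref{prop:mean_cov_transform} gives this directly from the existence of $\Sigma_{g,\phi}$. Independently, $g_t,\phi_t\in C_c^\infty(\R^d)$ because the chirp preserves compact support and smoothness, so the finite-moment discussion in Subsection~\ref{subsec:covariance} also applies. The normalization $\|g_t\|_2=\|\phi_t\|_2=1$ holds by unitarity. We then read off the cross block from the block formula \eqref{eq:block_transform} with $C=C_t$. Because $\Sigma_{x\omega}=0$, the new cross block is $\Sigma_{xx}C_t^T=t\,\Sigma_{xx}C_0^T$. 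Homogeneity of the Frobenius norm then gives
\[
\mathsf D(g_t,\phi_t)=t\,\|\Sigma_{xx}C_0^T\|_{\mathrm F}.
\]
This is exactly Corollary~\ref{cor:coupling_created} applied with $C=C_t$.

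To get divergence as $t\to\infty$, we need $\|\Sigma_{xx}C_0^T\|_{\mathrm F}>0$. This holds because $\Sigma_{xx}$ is positive definite, hence invertible, so $\Sigma_{xx}C_0^T=0$ would force $C_0=0$, contrary to assumption. The linear growth in $t$ then yields $\mathsf D(g_t,\phi_t)\to\infty$.

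There is no substantial obstacle here. The only point needing care is confirming that $\Sigma_{xx}$ is the same block for every $t$. This is evident from \eqref{eq:block_transform}, since the shear $S_{C_t}$ leaves the $x$-marginal unchanged, and from \eqref{eq:energy_covariance}, since the density is only sheared in $\omega$.
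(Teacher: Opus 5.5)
Your proposal is correct and follows the paper's route. Item (i) is Proposition~\ref{prop:analysis_synthesis_conjugacy} applied to $C_t$, and item (ii) is Corollary~\ref{cor:coupling_created} with $C=C_t$, together with invertibility of $\Sigma_{xx}$ to get $\|\Sigma_{xx}C_0^T\|_{\mathrm F}>0$; your extra checks (well-definedness of $\Sigma_{g_t,\phi_t}$, normalization, and that the $\Sigma_{xx}$ block does not change with $t$) are details the paper leaves implicit.
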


\begin{proof}
Point (i) is exactly Proposition~\ref{prop:analysis_synthesis_conjugacy}.
For (ii), Corollary~\ref{cor:coupling_created} applied to $C=C_t=tC_0$ gives
$$
\mathsf D(g_t,\phi_t)=\|\Sigma_{xx}(tC_0)^T\|_{\mathrm{F}}=t\,\|\Sigma_{xx}C_0^T\|_{\mathrm{F}},
$$
and the factor $\|\Sigma_{xx}C_0^T\|_{\mathrm{F}}$ is strictly positive because $\Sigma_{xx}$ is positive definite and $C_0\neq 0$.
\end{proof}

\begin{remark}[Implication for ``optimal atoms versus optimal frames'']\label{rem:optimal_atoms_vs_frames}
Proposition~\ref{prop:coupling_vs_bounds} shows that, at least within our explicit class,
atom-level coupling/localization descriptors and frame-level stability constants can be \emph{decoupled}:
one may vary coupling arbitrarily without changing $(A,B)$.
This provides a concrete mechanism behind the general question of whether minimizing an atom-level
uncertainty functional necessarily yields the ``best'' frame in global stability terms:
even when the frame bounds are fixed, the geometric structure of phase-space concentration may change substantially.
\end{remark}

\subsection{Robust reconstruction under coefficient noise: deterministic and statistical bounds}\label{subsec:robustness}

\subsubsection{A general noisy-coefficient model}
Let $g,\gamma\in L^2(\R^d)$ and let $\Lambda$ be a lattice.
Assume that $\mathcal G(g,\Lambda)$ is a Bessel system, not necessarily a frame, so that
the analysis operator $C_{g,\Lambda}$ is bounded. Assume also that $\mathcal G(\gamma,\Lambda)$
is a Bessel system, so that the synthesis operator $D_{\gamma,\Lambda}$ is bounded.
We consider the observation model
$$
y = C_{g,\Lambda} f + \eta \in \ell^2(\Lambda),
$$
where $f\in L^2(\R^d)$ is the unknown signal and $\eta\in\ell^2(\Lambda)$ is an additive noise sequence.
The reconstruction with synthesis window $\gamma$ is
$$
\widehat f := D_{\gamma,\Lambda}y
= D_{\gamma,\Lambda}C_{g,\Lambda}f + D_{\gamma,\Lambda}\eta
= \Theta_{g,\gamma,\Lambda}f + D_{\gamma,\Lambda}\eta.
$$

\subsubsection{Deterministic error bounds (exact dual and approximate dual)}
\begin{proposition}[Deterministic robustness bound]\label{prop:deterministic_robustness}
Assume that $\mathcal G(g,\Lambda)$ and $\mathcal G(\gamma,\Lambda)$ are Bessel systems, and let
$B_\gamma$ be a Bessel bound for $\mathcal G(\gamma,\Lambda)$, so that
$$
\|D_{\gamma,\Lambda}\|\le \sqrt{B_\gamma}.
$$
Then, for all $f\in L^2(\R^d)$ and all $\eta\in \ell^2(\Lambda)$,
\begin{equation}\label{eq:deterministic_bound}
\|f-\widehat f\|_2
\le
\|(I-\Theta_{g,\gamma,\Lambda})f\|_2
+
\sqrt{B_\gamma}\,\|\eta\|_{\ell^2}.
\end{equation}
In particular, if the two Bessel systems form an approximately dual pair, namely
$$
\delta:=\|I-\Theta_{g,\gamma,\Lambda}\|<1,
$$
then
\begin{equation}\label{eq:approx_dual_det_bound}
\|f-\widehat f\|_2
\le
\delta\,\|f\|_2
+
\sqrt{B_\gamma}\,\|\eta\|_{\ell^2}.
\end{equation}
If $\gamma$ is an exact dual window of $g$ on $\Lambda$, then
$\Theta_{g,\gamma,\Lambda}=I$ and \eqref{eq:deterministic_bound} reduces to
$$
\|f-\widehat f\|_2
\le
\sqrt{B_\gamma}\,\|\eta\|_{\ell^2}.
$$
If, moreover, $\gamma=\widetilde g=S_{g,\Lambda}^{-1}g$ is the canonical dual window and
$\mathcal G(g,\Lambda)$ has lower frame bound $A_g$, then $B_{\widetilde g}=A_g^{-1}$ and hence
\begin{equation}\label{eq:canonical_dual_noise_bound}
\|f-\widehat f\|_2
\le
\frac{1}{\sqrt{A_g}}\,\|\eta\|_{\ell^2}.
\end{equation}
\end{proposition}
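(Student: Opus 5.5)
The bound follows from a one-line decomposition of the error, so I will derive it and then specialize it in three steps. Since $\widehat f=\Theta_{g,\gamma,\Lambda}f+D_{\gamma,\Lambda}\eta$, we have
$$
f-\widehat f=(I-\Theta_{g,\gamma,\Lambda})f-D_{\gamma,\Lambda}\eta .
$$
Taking norms and applying the triangle inequality gives the estimate, provided $\|D_{\gamma,\Lambda}\|\le\sqrt{B_\gamma}$. This bound is assumed in the statement, but I will recall why it holds. For finitely supported $c$ and any $h$, one has $|\inner{D_{\gamma,\Lambda}c,h}|=|\sum_\lambda c_\lambda\inner{\pi(\lambda)\gamma,h}|\le\|c\|_{\ell^2}\sqrt{B_\gamma}\|h\|_2$ by Cauchy--Schwarz, exactly as in the proof of Proposition~\ref{prop:tensor_duals}. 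Unconditional convergence for general $c\in\ell^2$ then follows by density. This establishes \eqref{eq:deterministic_bound}.

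For the approximately dual case, I bound $\|(I-\Theta_{g,\gamma,\Lambda})f\|_2\le\delta\|f\|_2$ directly from the definition of the operator norm, which yields \eqref{eq:approx_dual_det_bound}. For an exact dual, the definition of a dual Gabor frame gives $\Theta_{g,\gamma,\Lambda}f=\sum_\lambda\inner{f,\pi(\lambda)g}\pi(\lambda)\gamma=f$ for every $f$. The first term therefore vanishes.

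The only step with content is the canonical-dual claim $B_{\widetilde g}=A_g^{-1}$. My plan is to use the commutation $S_{g,\Lambda}\pi(\lambda)=\pi(\lambda)S_{g,\Lambda}$ for $\lambda\in\Lambda$. This follows by reindexing the sum over the lattice: $\pi(\lambda)\pi(\mu)$ equals $\pi(\lambda+\mu)$ up to a unimodular phase, and that phase cancels between the analysis and synthesis factors. Consequently $\pi(\lambda)\widetilde g=S_{g,\Lambda}^{-1}\pi(\lambda)g$, and
$$
\sum_\lambda|\inner{f,\pi(\lambda)\widetilde g}|^2=\sum_\lambda|\inner{S_{g,\Lambda}^{-1}f,\pi(\lambda)g}|^2=\inner{S_{g,\Lambda}S_{g,\Lambda}^{-1}f,S_{g,\Lambda}^{-1}f}=\inner{f,S_{g,\Lambda}^{-1}f}.
$$
Since $A_gI\le S_{g,\Lambda}$, we get $S_{g,\Lambda}^{-1}\le A_g^{-1}I$, so $A_g^{-1}$ is a Bessel bound for $\mathcal G(\widetilde g,\Lambda)$. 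It is optimal when $A_g$ is the optimal lower bound, but only the upper estimate is needed. Substituting $B_\gamma=A_g^{-1}$ into the exact-dual bound gives \eqref{eq:canonical_dual_noise_bound}.

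The main obstacle is justifying the commutation relation carefully. I would verify it on each $f$ using the unconditional convergence of the frame-operator series, and note that the phase factors $e^{2\pi i(\cdot)}$ from the identity $\pi(\lambda)\pi(\mu)=c\,\pi(\lambda+\mu)$ cancel in the product $\inner{\cdot,\pi(\mu)g}\,\pi(\mu)g$.
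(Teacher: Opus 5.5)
Your proposal is correct and matches the paper's proof for the three main bounds: the same decomposition $f-\widehat f=(I-\Theta_{g,\gamma,\Lambda})f-D_{\gamma,\Lambda}\eta$, the triangle inequality, and the operator-norm bound. It also goes further than the paper, whose proof never justifies the claim $B_{\widetilde g}=A_g^{-1}$. Your argument supplies that missing step: the commutation $S_{g,\Lambda}\pi(\lambda)=\pi(\lambda)S_{g,\Lambda}$ gives $\sum_\lambda|\inner{f,\pi(\lambda)\widetilde g}|^2=\inner{f,S_{g,\Lambda}^{-1}f}\le A_g^{-1}\|f\|_2^2$, and you rightly note that equality, rather than just the upper bound, requires $A_g$ to be the optimal lower frame bound.
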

\begin{proof}
From $\widehat f=\Theta f + D_{\gamma,\Lambda}\eta$ we have
$$
f-\widehat f = (I-\Theta_{g,\gamma,\Lambda})f - D_{\gamma,\Lambda}\eta.
$$
Hence, by the triangle inequality and the operator norm bound for $D_{\gamma,\Lambda}$,
$$
\|f-\widehat f\|_2
\le
\|(I-\Theta_{g,\gamma,\Lambda})f\|_2 + \|D_{\gamma,\Lambda}\eta\|_2
\le
\|(I-\Theta_{g,\gamma,\Lambda})f\|_2 + \|D_{\gamma,\Lambda}\|\,\|\eta\|_{\ell^2}
\le
\|(I-\Theta)f\|_2 + \sqrt{B_\gamma}\,\|\eta\|_{\ell^2},
$$
which is \eqref{eq:deterministic_bound}. If $\|I-\Theta\|=\delta$, then
$\|(I-\Theta)f\|_2\le \delta\|f\|_2$, giving \eqref{eq:approx_dual_det_bound}.
If $\gamma$ is an exact dual, then $\Theta=I$ by definition, and the bias term vanishes.
\end{proof}

\subsubsection{Mean-square error under finite noisy measurements}
White noise on an infinite lattice is not an $\ell^2$ random element, so we work with a standard
finite-measurement model.
Let $\Lambda_N\subset\Lambda$ be a finite index set (e.g.\ a box truncation) and consider noisy
measurements
$$
y_\lambda=\inner{f,\pi(\lambda)g}+\eta_\lambda,\qquad \lambda\in\Lambda_N,
$$
where $\{\eta_\lambda\}_{\lambda\in\Lambda_N}$ are complex random variables satisfying
$$
\E[\eta_\lambda]=0,
\qquad
\E[\eta_\lambda\overline{\eta_\mu}]
=
\sigma^2\,\delta_{\lambda\mu},
\qquad \lambda,\mu\in\Lambda_N.
$$

Define the truncated synthesis operator
$$
D_{\gamma,\Lambda_N}c := \sum_{\lambda\in\Lambda_N} c_\lambda\,\pi(\lambda)\gamma,
$$
and the estimator
$$
\widehat f_N:=\sum_{\lambda\in\Lambda_N} y_\lambda\,\pi(\lambda)\gamma
=
\Theta_{g,\gamma,\Lambda_N}f + \sum_{\lambda\in\Lambda_N}\eta_\lambda\,\pi(\lambda)\gamma,
$$
where $\Theta_{g,\gamma,\Lambda_N}:=D_{\gamma,\Lambda_N}C_{g,\Lambda_N}$ is the finite-section mixed operator.

\begin{proposition}[Exact MSE decomposition for uncorrelated coefficient noise]\label{prop:mse_finite}
Assume $\E[\eta_\lambda]=0$ and $\E[\eta_\lambda\overline{\eta_\mu}]=\sigma^2\delta_{\lambda\mu}$ on $\Lambda_N$.
Then
\begin{equation}\label{eq:mse_decomposition}
\E\|f-\widehat f_N\|_2^2
=
\|(I-\Theta_{g,\gamma,\Lambda_N})f\|_2^2
+
\sigma^2\,|\Lambda_N|\,\|\gamma\|_2^2.
\end{equation}
In particular, if $\gamma$ is an exact dual for the finite-section model (i.e.\ $\Theta_{g,\gamma,\Lambda_N}=I$ on the relevant subspace),
then $\E\|f-\widehat f_N\|_2^2=\sigma^2|\Lambda_N|\|\gamma\|_2^2$.
\end{proposition}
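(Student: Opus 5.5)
We decompose the error into a deterministic bias term and a zero-mean noise term, and show that the cross term vanishes in expectation. Write
$$
f-\widehat f_N=(I-\Theta_{g,\gamma,\Lambda_N})f-\sum_{\lambda\in\Lambda_N}\eta_\lambda\,\pi(\lambda)\gamma=:b-e,
$$
where $b$ is deterministic and $e$ is a random element of $L^2(\R^d)$. Because $\Lambda_N$ is finite, $e$ is a finite linear combination with square-integrable random coefficients. There are therefore no convergence issues, and expectation may be exchanged freely with inner products and finite sums. Expanding the square gives
$$
\|f-\widehat f_N\|_2^2=\|b\|_2^2-2\Rea\inner{b,e}+\|e\|_2^2 .
$$

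The cross term is handled by linearity:
$$
\E\inner{b,e}=\sum_{\lambda\in\Lambda_N}\overline{\E[\eta_\lambda]}\,\inner{b,\pi(\lambda)\gamma}=0,
$$
using $\E[\eta_\lambda]=0$.

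For the noise energy we expand
$$
\|e\|_2^2=\sum_{\lambda,\mu\in\Lambda_N}\eta_\lambda\overline{\eta_\mu}\,\inner{\pi(\lambda)\gamma,\pi(\mu)\gamma}.
$$
Taking expectations with $\E[\eta_\lambda\overline{\eta_\mu}]=\sigma^2\delta_{\lambda\mu}$ kills all off-diagonal terms. It leaves $\sigma^2\sum_{\lambda}\|\pi(\lambda)\gamma\|_2^2$. Since $\pi(\lambda)=M_\omega T_x$ is unitary, each summand equals $\|\gamma\|_2^2$, so
$$
\E\|e\|_2^2=\sigma^2|\Lambda_N|\,\|\gamma\|_2^2 .
$$
Combining the three pieces yields \eqref{eq:mse_decomposition}.

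For the special case we assume $\Theta_{g,\gamma,\Lambda_N}f=f$ on the relevant subspace. Then $b=0$ for such $f$, and only the variance term remains.

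The only point requiring care is the noise term. The exact identity depends on the noise being uncorrelated, since otherwise the Gram entries $\inner{\pi(\lambda)\gamma,\pi(\mu)\gamma}$ would contribute. It also depends on the unitarity of time--frequency shifts, which makes each diagonal entry equal to $\|\gamma\|_2^2$ rather than only bounded by a Bessel constant. With both in place the computation is an identity, not an estimate, and everything else is routine finite-dimensional bookkeeping.
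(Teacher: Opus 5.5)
Your proposal is correct and follows essentially the same route as the paper: split $f-\widehat f_N$ into the deterministic bias $(I-\Theta_{g,\gamma,\Lambda_N})f$ and the zero-mean noise term, kill the cross term using $\E[\eta_\lambda]=0$, and evaluate the noise energy using uncorrelatedness together with the unitarity of $\pi(\lambda)$. The only cosmetic difference is that you compute the cross term coefficientwise, as $\sum_\lambda \overline{\E[\eta_\lambda]}\inner{b,\pi(\lambda)\gamma}$, whereas the paper writes it as $\inner{b,\E[n]}$ with $\E[n]=0$.
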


{\begin{proof}
Write
$$
f-\widehat f_N
=
(I-\Theta_{g,\gamma,\Lambda_N})f
-
\sum_{\lambda\in\Lambda_N}\eta_\lambda\,\pi(\lambda)\gamma
=:b-n,
$$
where $b$ is deterministic and $n$ is random. Since $\E[\eta_\lambda]=0$ for all
$\lambda$, we have $\E[n]=0$. Hence
$$
\E\|b-n\|_2^2
=
\|b\|_2^2
+
\E\|n\|_2^2
-
2\Rea\,\inner{b,\E[n]}
=
\|b\|_2^2+\E\|n\|_2^2.
$$
It remains to compute $\E\|n\|_2^2$. Using the complex Hilbert-space inner product,
$$
\begin{aligned}
\|n\|_2^2
&=
\left\|
\sum_{\lambda\in\Lambda_N}\eta_\lambda\,\pi(\lambda)\gamma
\right\|_2^2 \\
&=
\sum_{\lambda,\mu\in\Lambda_N}
\eta_\lambda\overline{\eta_\mu}\,
\inner{\pi(\lambda)\gamma,\pi(\mu)\gamma}.
\end{aligned}
$$
Taking expectations and using
$\E[\eta_\lambda\overline{\eta_\mu}]=\sigma^2\delta_{\lambda\mu}$, we obtain
$$
\begin{aligned}
\E\|n\|_2^2
&=
\sum_{\lambda,\mu\in\Lambda_N}
\E[\eta_\lambda\overline{\eta_\mu}]\,
\inner{\pi(\lambda)\gamma,\pi(\mu)\gamma} \\
&=
\sigma^2
\sum_{\lambda\in\Lambda_N}
\inner{\pi(\lambda)\gamma,\pi(\lambda)\gamma} \\
&=
\sigma^2
\sum_{\lambda\in\Lambda_N}
\|\gamma\|_2^2
=
\sigma^2|\Lambda_N|\|\gamma\|_2^2,
\end{aligned}
$$
because each time-frequency shift $\pi(\lambda)$ is unitary. Combining this identity with the
bias term gives \eqref{eq:mse_decomposition}.
\end{proof}}

\subsubsection{Robustness bounds under chirp transport}
{We finally record how the deterministic and statistical robustness bounds behave under chirp
transport. The operator-norm quantities and Bessel bounds are invariant under the unitary
conjugacy, while the finite-section MSE identity is equivariant with respect to the simultaneous
transformation of the signal, windows, and lattice.}

\begin{proposition}[Robustness bounds under chirp transport]\label{prop:robustness_transport}
Let $C=C^T$, $\Lambda_C=S_C\Lambda$, $g_C=U_C g$, $\gamma_C=U_C\gamma$.
\begin{enumerate}[label=\textup{(\roman*)}]
\item (Approximate-dual error) $\|I-\Theta_{g_C,\gamma_C,\Lambda_C}\|=\|I-\Theta_{g,\gamma,\Lambda}\|$.
\item (Deterministic bound) The Bessel bound of $\gamma$ equals the Bessel bound of $\gamma_C$, hence
the constant $\sqrt{B_\gamma}$ in \eqref{eq:deterministic_bound} is unchanged.
\item {(Finite-section MSE equivariance) Let $\Lambda_N\subset\Lambda$ be finite and set
$\Lambda_{C,N}:=S_C\Lambda_N$. For $f\in L^2(\R^d)$, define
$$
\operatorname{MSE}_{g,\gamma,\Lambda_N}(f)
:=
\|(I-\Theta_{g,\gamma,\Lambda_N})f\|_2^2
+
\sigma^2|\Lambda_N|\|\gamma\|_2^2 .
$$
Then
$$
\operatorname{MSE}_{g_C,\gamma_C,\Lambda_{C,N}}(U_Cf)
=
\operatorname{MSE}_{g,\gamma,\Lambda_N}(f).
$$
Equivalently, for an arbitrary $h\in L^2(\R^d)$,
$$
\operatorname{MSE}_{g_C,\gamma_C,\Lambda_{C,N}}(h)
=
\operatorname{MSE}_{g,\gamma,\Lambda_N}(U_C^{-1}h).
$$}
\end{enumerate}
\end{proposition}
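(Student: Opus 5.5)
Each of the three items follows from the unitary conjugacy already established; the plan is to reduce everything to Proposition~\ref{prop:mixed_conjugacy}, Proposition~\ref{prop:analysis_synthesis_conjugacy}, and the unitarity of $U_C$ and of each $\pi(\lambda)$.

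For (i), Corollary~\ref{cor:approx_dual_transport} already gives
$I-\Theta_{g_C,\gamma_C,\Lambda_C}=U_C(I-\Theta_{g,\gamma,\Lambda})U_C^{-1}$, so the norms agree. The only proviso is that $\mathcal G(g,\Lambda)$ and $\mathcal G(\gamma,\Lambda)$ must be Bessel, which is the standing assumption of the noisy model.

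For (ii), Proposition~\ref{prop:analysis_synthesis_conjugacy} applied to $\gamma$ gives $\|C_{\gamma_C,\Lambda_C}\|=\|C_{\gamma,\Lambda}\|$. The optimal Bessel bound is the square of this norm, so it is unchanged. If a non-optimal bound $B_\gamma$ is used, the inequality $\sum_\mu|\inner{f,\pi(\mu)\gamma_C}|^2=\sum_\lambda|\inner{U_C^{-1}f,\pi(\lambda)\gamma}|^2\le B_\gamma\|f\|^2$ shows that the same constant still works.

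For (iii), the key step is a finite-section version of Proposition~\ref{prop:mixed_conjugacy}:
$$
\Theta_{g_C,\gamma_C,\Lambda_{C,N}}=U_C\,\Theta_{g,\gamma,\Lambda_N}\,U_C^{-1}.
$$
This holds term by term, with no convergence issue because the sum is finite. By Lemma~\ref{lem:chirp_covariance}, for $\lambda=(x,\omega)\in\Lambda_N$,
$$
\inner{f,\pi(S_C\lambda)g_C}\,\pi(S_C\lambda)\gamma_C
=e^{-\pi i x^TCx}e^{\pi i x^TCx}\inner{U_C^{-1}f,\pi(\lambda)g}\,U_C\pi(\lambda)\gamma,
$$
and the phases cancel. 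Summing over $\lambda\in\Lambda_N$ and using that $\lambda\mapsto S_C\lambda$ is a bijection $\Lambda_N\to\Lambda_{C,N}$ gives the identity. Then
$$
\|(I-\Theta_{g_C,\gamma_C,\Lambda_{C,N}})U_Cf\|_2=\|U_C(I-\Theta_{g,\gamma,\Lambda_N})f\|_2=\|(I-\Theta_{g,\gamma,\Lambda_N})f\|_2.
$$
For the variance term, $|\Lambda_{C,N}|=|\Lambda_N|$ because $S_C$ is injective ($\det S_C=1$), and $\|\gamma_C\|_2=\|\gamma\|_2$ by unitarity of $U_C$. Adding the bias and variance terms gives the first MSE identity, and substituting $h=U_Cf$ gives the equivalent form.

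There is no real obstacle. The only point needing care is that the MSE statement is an equivariance rather than an invariance: the bias term must be evaluated at $U_Cf$, so that the conjugating $U_C^{-1}$ is absorbed.
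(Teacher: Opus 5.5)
Your proposal is correct and follows essentially the same route as the paper: (i) comes from Corollary~\ref{cor:approx_dual_transport}, (ii) from Proposition~\ref{prop:analysis_synthesis_conjugacy}, and (iii) from the finite-section conjugacy $\Theta_{g_C,\gamma_C,\Lambda_{C,N}}=U_C\Theta_{g,\gamma,\Lambda_N}U_C^{-1}$ together with $|\Lambda_{C,N}|=|\Lambda_N|$ and $\|\gamma_C\|_2=\|\gamma\|_2$. The differences are cosmetic: you phrase (ii) via the analysis operator norm instead of the synthesis operator norm, which is equivalent since $D_{\gamma,\Lambda}=C_{\gamma,\Lambda}^*$; and you verify the finite-section conjugacy term by term instead of citing Proposition~\ref{prop:mixed_conjugacy} for a finite index set, which is slightly more careful because that proposition is stated for lattices.
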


\begin{proof}
(i) is exactly Corollary~\ref{cor:approx_dual_transport} proved earlier.

(ii) By Proposition~\ref{prop:analysis_synthesis_conjugacy}, $\|D_{\gamma_C,\Lambda_C}\|=\|D_{\gamma,\Lambda}\|$,
hence the optimal Bessel bound is preserved, so $\sqrt{B_{\gamma_C}}=\sqrt{B_\gamma}$.

(iii) {For the finite section, note first that
$$
\|\gamma_C\|_2=\|\gamma\|_2
$$
because $U_C$ is unitary, and
$$
|\Lambda_{C,N}|=|\Lambda_N|
$$
because $S_C$ is a bijection between the finite sets. Hence the noise contribution
$$
\sigma^2|\Lambda_{C,N}|\|\gamma_C\|_2^2
$$
equals
$$
\sigma^2|\Lambda_N|\|\gamma\|_2^2.
$$
Moreover, by Proposition~\ref{prop:mixed_conjugacy}, applied to the finite set $\Lambda_N$,
$$
\Theta_{g_C,\gamma_C,\Lambda_{C,N}}
=
U_C\,\Theta_{g,\gamma,\Lambda_N}\,U_C^{-1}.
$$
Therefore, for every $f\in L^2(\R^d)$,
$$
\begin{aligned}
\|(I-\Theta_{g_C,\gamma_C,\Lambda_{C,N}})U_Cf\|_2
&=
\|U_C(I-\Theta_{g,\gamma,\Lambda_N})f\|_2 \\
&=
\|(I-\Theta_{g,\gamma,\Lambda_N})f\|_2.
\end{aligned}
$$
Combining equality of the bias terms for the corresponding signals with equality of the noise
terms yields
$$
\operatorname{MSE}_{g_C,\gamma_C,\Lambda_{C,N}}(U_Cf)
=
\operatorname{MSE}_{g,\gamma,\Lambda_N}(f).
$$
The equivalent identity for a general $h$ follows by taking $h=U_Cf$, i.e. $f=U_C^{-1}h$.}
\end{proof}
{The finite-section MSE identity above is an equivariance statement. It compares the original
system acting on $f$ with the chirp-transported system acting on the corresponding signal
$U_Cf$. It does not assert, in general, that
$$
\operatorname{MSE}_{g_C,\gamma_C,\Lambda_{C,N}}(f)
=
\operatorname{MSE}_{g,\gamma,\Lambda_N}(f)
$$
for the same unchanged signal $f$. Indeed,
$$
\|(I-\Theta_{g_C,\gamma_C,\Lambda_{C,N}})f\|_2
=
\|(I-\Theta_{g,\gamma,\Lambda_N})U_C^{-1}f\|_2,
$$
which need not equal
$$
\|(I-\Theta_{g,\gamma,\Lambda_N})f\|_2.
$$
}
\begin{remark}[Summary]
Within the explicit chirp-transport class of this paper, non-separability can be quantified by a covariance-level defect
$\mathsf D$ that grows linearly with the coupling parameter $C$, while frame bounds and robustness constants remain invariant.
This isolates a precise sense in which non-separability changes the geometry of time--frequency concentration without changing
the global stability/robustness constants inherited from the separable model.
\end{remark}

\section{Concluding remarks}

{In this paper we proposed a constructive route to chirped Gabor windows on $\R^d$ based on three ingredients:
separable exact dual pairs, chirp covariance, and transport of duality. The main outcome is an explicit class
of higher-dimensional chirped tensor-product windows on lattices of the form
$$
\Lambda_{A,B,C}
=
\begin{pmatrix}
A&0\\
CA&B
\end{pmatrix}\Z^{2d},
\qquad C=C^T,
$$
for which exact dual frames, compact support, smoothness, and approximation of the canonical dual are inherited
from the separable setting. Under the off-diagonal condition on $C$ stated in
Proposition~\ref{prop:nonseparable}, these windows are genuinely non-separable.}

The scope of the paper is intentionally constructive. In this sense, the present work is closer in spirit to the explicit-window philosophy of \citep{DGM86} and to the geometric constructive program of \citep{PRW12} than to a full classification theory. At the same time, the control of exact and approximate duals connects naturally with the lines developed in \citep{Sto22,CJKK18}. In relation to earlier non-separable-lattice schemes, especially the shear-based formulations
of \citet{VanLeestBastiaans2000} and the discrete implementations in
\citep{VanLeest1999,VanLeestBastiaans2004}, the contribution here is therefore not the
use of shear geometry per se. Rather, it is the explicit Hilbert-space transport of compactly
supported tensor-product dual pairs through a chirp, together with the preservation of
exact and approximate duality data and the off-diagonal criterion guaranteeing genuine
functional non-separability of the resulting windows.

Several directions remain open.

First, it would be desirable to replace the purely chirp-type coupling by more general metaplectic transformations while still retaining a usable control on support, smoothness, or anisotropic decay. This would enlarge the lattice classes accessible by explicit constructions and would move the present approach closer to the geometric flexibility already visible in \citep{PRW12}.

Second, one would like to understand to what extent the present transport principle can be combined with higher-dimensional geometric constructions of smooth compactly supported windows in order to obtain dual pairs beyond the current lower block-triangular setting. It would also be interesting to investigate whether corresponding dual systems enjoy stability properties in modulation or amalgam spaces, in the direction suggested by \citep{BCKOR13}.

Third, the constructive program developed here should eventually be compared with the genuinely different Gaussian program. For Gaussian windows, already in dimension $d\geq 2$ the frame property is tied to sampling in Bargmann--Fock spaces of several complex variables and is no longer governed by density alone \citep{Gro11}. A natural long-term objective is therefore to investigate whether some part of the present dual-transport philosophy can be merged with the multivariate Gaussian/sampling viewpoint. From a complementary perspective, one may also ask whether lattice-shape optimization phenomena, such as those studied for separable Gaussian lattices in \citep{FS17}, admit constructive counterparts in the present chirp-sheared setting.

\section*{Acknowledgment}
A. Mazzoccoli and P. Vellucci are members of the INdAM Research group GNCS. We are sincerely grateful to Prof. Laura De Carli for her valuable support and insightful guidance.

\bibliographystyle{elsarticle-harv}
\bibliography{example}

\end{document}